\DeclareMathOperator{\dom}{Dom}
\DeclareMathOperator{\support}{supp}
\DeclareMathOperator*{\essup}{ess\,sup}
\newtheorem{Th}{Theorem}[section]
\newtheorem{Cor}[Th]{Corollary}
\newtheorem{Lem}{Lemma}[section]
\newtheorem{Rem}[Lem]{Remark}
\newcommand{\lam}{\lambda}
\newcommand{\rnp}{\mathbb{R}^n_{+}}
\newcommand{\rp}{\mathbb{R}_{+}}
\newcommand{\q}{\mathfrak{q}}
\title[Calder\'on-Zygmund operators in the Bessel setting]
	{Calder\'on-Zygmund operators in the Bessel setting}
\author[J.J. Betancor]{J.J. Betancor}
\address{Jorge J. Betancor \newline
		Departamento de An\'alisis Matem\'atico, 
		Universidad de la Laguna \newline
		Campus de Anchieta, Avda. Astrof\'{\i}sico Francisco S\'anchez, s/n, \newline
		38271 La Laguna (Sta. Cruz de Tenerife), Spain
		}
\email{jbetanco@ull.es}
\author[A.J. Castro]{A.J. Castro}
\address{Alejandro J. Castro \newline
		Departamento de An\'alisis Matem\'atico, 
		Universidad de la Laguna \newline
		Campus de Anchieta, Avda. Astrof\'{\i}sico Francisco S\'anchez, s/n, \newline
		38271 La Laguna (Sta. Cruz de Tenerife), Spain
		}
\email{ajcastro@ull.es}
\author[A. Nowak]{A. Nowak}
\address{Adam Nowak, \newline
			Instytut Matematyczny,
      Polska Akademia Nauk, \newline
      \'Sniadeckich 8,
      00--956 Warszawa, Poland \newline
			\indent and \newline
			Instytut Matematyki i Informatyki,
      Politechnika Wroc\l{}awska,       \newline
      Wyb{.} Wyspia\'nskiego 27,
      50--370 Wroc\l{}aw, Poland      
      }
\email{adam.nowak@pwr.wroc.pl}
\begin{document}

\subjclass[2000]{42C05 (primary), 42C20 (secondary)}
\keywords{Bessel operator, Bessel semigroup, maximal operator, square function, multiplier,
    Riesz transform, Calder\'on-Zygmund operator}

\begin{abstract}
We study several fundamental operators in harmonic analysis related to Bessel operators,
including maximal operators related to heat and Poisson semigroups,
Littlewood-Paley-Stein square functions, multipliers of Laplace transform type and Riesz transforms.
We show that these are (vector-valued)
Calder\'on-Zygmund operators in the sense of the associated space 
of homogeneous type, and hence their mapping properties follow from the general theory.
\end{abstract}

\thanks{
The first-named author was partially supported by MTM2007/65609. 
The second-named author was supported by a grant for Master studies of ``la Caixa''.
The third-named author was partially supported by MNiSW Grant N N201 417839.
}

\maketitle

\section{Introduction}

In their seminal article \cite{MS} Muckenhoupt and Stein investigated in a systematic way harmonic
analysis associated with ultraspherical expansions and their continuous counterparts, Hankel transforms.
That paper is considered as a starting point of an important development connecting harmonic analysis
and discrete and continuous orthogonal expansions. Later many authors contributed to the subject by
studying various questions in different settings, including in particular expansions into classical
orthogonal polynomials; see \cite[Section 1]{BCC1} for sample references. 
In the recent years one can observe an increasing interest in harmonic analysis
of orthogonal expansions, as confirmed by the attention of many mathematicians and numerous papers.

In this article we study several fundamental harmonic analysis operators in the $n$-dimensional setting
related to Hankel transforms. This framework is connected with the Bessel operator
$$
\Delta_{\lam} = - \Delta - \sum_{i=1}^n \frac{2\lam_i}{x_i}\, \frac{\partial}{\partial x_i},
$$
where $\lam \in [0,\infty)^n$ is a multi-index. The operator $\Delta_{\lam}$ will play in our
considerations a similar role to that of the Euclidean Laplacian in the classical setting. It is formally
self-adjoint and nonnegative in $L^2(\rnp,d\mu_{\lam})$, where $\rnp=(0,\infty)^n$ and 
$$
d\mu_{\lam}(x) = x_1^{2\lam_1} \cdot \ldots \cdot x_n^{2\lam_n} \, dx, \qquad x \in \rnp.
$$
The spectral decomposition of $\Delta_{\lam}$, or rather its suitable self-adjoint extension, is given
via the Hankel transform, see Section \ref{sec:prel} for details.
Moreover, $\Delta_{\lam}$ admits the decomposition $\Delta_{\lam} = \sum_i D^{*}_iD_i$, where
$D_i = \partial\slash \partial x_i$ are the usual partial derivatives, and $D_i^*$ are their formal
adjoints in $L^2(\rnp,d\mu_{\lam})$. Thus $D_i$, $i=1,\ldots,n$, are naturally associated derivatives
with the Bessel operator.

The main objects of our study are the following operators related to $\Delta_{\lam}$
(see Section \ref{sec:prel} for strict definitions):
\begin{itemize}
\item maximal operators
$$
W^{\lam}_{*}\colon f \mapsto \big\|\exp(-t\Delta_{\lam})f\big\|_{L^{\infty}(dt)}, \qquad
P^{\lam}_{*}\colon f \mapsto \big\|\exp\big(-t\sqrt{\Delta_{\lam}}\,\big)f\big\|_{L^{\infty}(dt)},
$$
\item Littlewood-Paley-Stein type square functions
\begin{align*}
& g^{\lam,W}_{m,k,r}\colon f\mapsto \big\| D^m \partial_t^k \exp(-t\Delta_{\lam})f
	\big\|_{L^r(t^{(|m|\slash 2+k)r-1}dt)}, \\
& g^{\lam,P}_{m,k,r}\colon f\mapsto \big\| D^m \partial_t^k \exp\big(-t\sqrt{\Delta_{\lam}}\,
	\big)f \big\|_{L^r(t^{(|m|+k)r-1}dt)},
\end{align*}
\item multipliers of Laplace transform type
\begin{align*}
& T^{\lam}_{\mathcal{M}_W}\colon f \mapsto \Delta_{\lam} \int_0^{\infty} 
	\exp(-t\Delta_{\lam})f\, \psi(t)\, dt, \\
& T^{\lam}_{\mathcal{M}_P}\colon f \mapsto \sqrt{\Delta_{\lam}} \int_0^{\infty} 
	\exp\big(-t\sqrt{\Delta_{\lam}}\, \big)f\, \psi(t)\, dt,
\end{align*}
\item Riesz transforms
$$
R_m^{\lam}\colon f \mapsto D^m (\Delta_{\lam})^{-|m|\slash 2}f.
$$
\end{itemize} 
We treat all these operators in a unified way, by means of the Calder\'on-Zygmund theory.
Our main result, Theorem \ref{thm:main}, says that these are either scalar-valued, or can be viewed as 
vector-valued, Calder\'on-Zygmund operators in the sense of the triple 
$(\rnp,d\mu_{\lam},|\cdot|)$, where $|\cdot|$ stands for the ordinary distance. 
According to the terminology of Coifman and Weiss \cite{CW}, this triple forms a space of homogeneous
type (this means, in particular, that the measure $\mu_{\lam}$ possesses the doubling property).
Consequences are then
delivered by the general theory. In particular, we conclude mapping properties in weighted $L^p$ spaces.

Typically the main difficulty related to the Calder\'on-Zygmund approach is to show suitable kernel
estimates. Inspired by earlier ideas used in certain Laguerre settings \cite{NS2,Sa,Szarek}, we present
a convenient and transparent technique based on an integral representation of the Bessel heat kernel
that emerges from Schl\"afli's Poisson type formula for the modified Bessel function of the first kind,
see Section \ref{sec:prep}. It is remarkable that a similar method has been developed recently
by Nowak and Sj\"ogren \cite{NoSj3} in the more complex setting of classical Jacobi expansions.

Our present results constitute a continuation and extension of many earlier investigations in the Bessel
setting. This concerns the fundamental paper \cite{MS} as well as more recent articles, see for instance
\cite{AK,BBFMT,BFMR,BFMT,BFS,BHNV,BMR,Stem}. In all the mentioned cases, the one dimensional situation
was considered. The study of the multi-dimensional Bessel setting has been undertaken only recently by
Betancor, Castro and Curbelo \cite{BCC1,BCC2}, and by methods not involving the Calder\'on-Zygmund theory.
The results of this paper are also related to those of \cite{BCC1,BCC2}.

The paper is organized as follows. Section \ref{sec:prel} contains the setup, strict definitions of
the investigated operators and statements of the main results. In Section \ref{sec:prep} we gather
various preparatory facts and results needed to furnish the proof of the main theorem, which is then done
in Section \ref{sec:proof}.

Throughout the paper we use a standard notation with essentially all symbols referring to the space
of homogeneous type $(\rnp,d\mu_{\lam},|\cdot|)$. 
Thus $C_c^{\infty}(\rnp)$ denotes the space of smooth and compactly supported functions in $\rnp$.
By $\langle f, g\rangle_{d\mu_{\lam}}$ we mean
$\int_{\rnp} f(x)\overline{g(x)}d\mu_{\lam}(x)$ whenever the integral makes sense. Further,
$L^p(wd\mu_{\lam})$ stands for the weighted $L^p$ space, $w$ being a nonnegative weight on $\rnp$.
Given $1\le p < \infty$, $p'$ is its adjoint exponent, $1\slash p + 1\slash p' =1$. For $1\le p < \infty$,
we denote by $A_p^{\lam}= A_p^{\lam}(\rnp,d\mu_{\lam})$ the Muckenhoupt class of $A_p$ weights related
to the measure $d\mu_{\lam}$. While writing estimates, we will use the notation $X \lesssim Y$ to
indicate that $X \le CY$ with a positive constant $C$ independent of significant quantities. We shall
write $X \simeq Y$ when simultaneously $X \lesssim Y$ and $Y \lesssim X$.

\section{Preliminaries and main results} \label{sec:prel}

Let $\lam \in [0,\infty)^n$. For $z\in \rnp$, consider the functions
$$
\varphi_{z}^{\lam}(x) = \prod_{i=1}^n (x_i z_i)^{-\lam_i+1\slash 2} J_{\lam_i-1\slash 2}(x_i z_i),
	\qquad x \in \rnp,
$$
where $J_{\nu}$ denotes the Bessel function of the first kind and order $\nu$, cf. \cite{Wat}.
It is well known that for each $z \in \rnp$, the function $\varphi_{z}^{\lam}$ is an eigenfunction
of the $n$-dimensional Bessel operator $\Delta_{\lam}$, and the corresponding eigenvalue is
$|z|^2= z_1^2+\ldots + z_n^2$,
$$
\Delta_{\lam} \varphi_{z}^{\lam} = |z|^2 \varphi_{z}^{\lam}, \qquad z \in \rnp.
$$
The $n$-dimensional Hankel transform $h_{\lam}$ defined by 
$$
h_{\lam}f(z) = \int_{\rnp} \varphi_{z}^{\lam}(x) f(x)\, d\mu_{\lam}(x), \qquad z \in \rnp,
$$
plays in the Bessel context a similar role as the Fourier transform in the Euclidean setting.
It is well known that $h_{\lam}$ is an isometry in $L^2(d\mu_{\lam})$ and it coincides there
with its inverse, $h_{\lam}^{-1}=h_{\lam}$. Moreover, for sufficiently regular functions $f$,
say $f \in C_c^{\infty}(\rnp)$, we have
\begin{equation*}
h_{\lam}(\Delta_{\lam}f)(z) = |z|^2 h_{\lambda}(f)(z), \qquad z \in \rnp.
\end{equation*}
Note that in dimension one and for $\lambda = 0$ one recovers here the setting of the cosine
transform on the positive half-line. 

We consider the nonnegative self-adjoint extension of $\Delta_{\lam}$ (still denoted by the same symbol)
defined by
\begin{equation} \label{Bes_spec}
\Delta_{\lam}f = h_{\lam}(|z|^2 h_{\lam}f), \qquad f \in \dom(\Delta_{\lam}),
\end{equation}
on the domain
$$
\dom(\Delta_{\lam}) = \big\{ f\in L^2(d\mu_{\lam}) : |z|^2 h_{\lam}f \in L^2(d\mu_{\lam}) \big\}.
$$
Then the spectral decomposition of $\Delta_{\lam}$ is given via the Hankel transform.

The semigroup $\{W_t^{\lam}\}=\{\exp(-t\Delta_{\lam})\}$ generated by $-\Delta_{\lam}$ is usually
referred to as the heat semigroup associated with the Bessel operator, or simply the Bessel heat semigroup.
It has the integral representation
\begin{equation} \label{int_heat}
W_t^{\lam}f(x) = \int_{\rnp} W_t^{\lam}(x,y)f(y)\, d\mu_{\lam}(y), \qquad f \in L^2(d\mu_{\lam}),
	\qquad x \in \rnp, \quad t>0,
\end{equation}
where the associated heat kernel is given by (see \cite[p.\,395]{Wat})
\begin{align*}
W_t^{\lam}(x,y) & = \int_{\rnp} e^{-t|z|^2} \varphi_z^{\lam}(x) \varphi_z^{\lam}(y)\, d\mu_{\lam}(z)\\
& =	\frac{1}{(2t)^n} \exp\Big( -\frac{1}{4t}\big(|x|^2+|y|^2\big)\Big)
	\prod_{i=1}^n (x_i y_i)^{-\lambda_i+1\slash 2} I_{\lambda_{i}-1/ 2}
		\Big(\frac{x_i y_i}{2t} \Big), \qquad x,y \in \rnp, \;\; t>0;
\end{align*}
here $I_{\nu}$ denotes the modified Bessel function of the first kind and order $\nu$ (cf. \cite{Wat}).
It may be deduced that the integral in \eqref{int_heat} actually converges for more general functions
from weighted $L^p$ spaces with Muckenhoupt weights, producing always smooth functions of 
$(x,t) \in \rnp \times \rp$, and thus providing a good definition of $W_t^{\lam}$ on these spaces,
see Lemma \ref{lem:heat} below.
Moreover, \eqref{int_heat} defines a symmetric diffusion semigroup in 
$L^p(d\mu_{\lam})$, $1\le p \le \infty$, in the sense of Stein \cite[p.\,65]{Stlitt},
which is Markovian; see for instance \cite[Section 6]{NS5}.
The semigroup $\{P_t^{\lam}\} = \{\exp(-t\sqrt{\Delta_{\lam}})\}$ generated by the square root of the
Bessel operator is called the Poisson-Bessel semigroup. By the subordination principle,
it is related to $\{W_t^{\lam}\}$ by
$$
P_t^{\lam}f(x) = \int_0^{\infty} W^{\lam}_{t^2\slash (4u)}f(x) \, \frac{e^{-u}du}{\sqrt{\pi u}},
	\qquad x \in \rnp, \quad t>0.
$$
The maximal operators of these semigroups are defined by
$$
W_{*}^{\lam}f = \sup_{t>0} |W_t^{\lam}f|, \qquad P_{*}^{\lam}f = \sup_{t>0}|P_t^{\lam}f|.
$$
As is well known, mapping properties of $W^{\lam}_{*}$ and $P^{\lam}_{*}$ 
are connected with the boundary behavior of the semigroups. Notice that $P_*^{\lam}f\le W_*^{\lam}f$,
by subordination. 

Littlewood-Paley-Stein square functions based on $\{W_t^{\lam}\}$ and 
$\{P_t^{\lam}\}$ have the general form
\begin{align*}
g_{m,k,r}^{\lam,W}(f)(x) & = \big\| \partial_x^{m}\partial_t^k W_t^{\lam}f(x)
	\big\|_{L^r(t^{(k+|m|\slash 2)r-1}dt)}, \qquad x \in \rnp,\\
g_{m,k,r}^{\lam,P}(f)(x) & = \big\| \partial_x^{m}\partial_t^k P_t^{\lam}f(x)
	\big\|_{L^r(t^{(k+|m|)r-1}dt)}, \qquad x \in \rnp,	
\end{align*}
where $2\le r < \infty$, $k \in \mathbb{N}$, $m \in \mathbb{N}^n$ is a multi-index,
$|m|=m_1+\ldots+m_n$ is its length, and $\partial_x^{m} = \partial_{x_1}^{m_1}\ldots \partial_{x_n}^{m_n}$.
Square functions of this form are important tools in harmonic analysis.

Given a bounded measurable function $\mathcal{M}$ on $\rnp$, the associated Hankel multiplier 
$T^{\lam}_{\mathcal{M}}$ is defined by
$$
T^{\lam}_{\mathcal{M}}f = h_{\lam}(\mathcal{M} h_{\lam}f), \qquad f \in L^2(d\mu_{\lam}).
$$
Clearly, $T^{\lam}_{\mathcal{M}}$ is well defined in $L^2(d\mu_{\lam})$.
We say that $\mathcal{M}$ is of Laplace transform type when it can be represented as
$$
\mathcal{M}(z) =
\mathcal{M}_W(z) = |z|^2\int_0^{\infty} e^{-|z|^2s}\psi(s)\, ds, \qquad z \in \rnp,
$$
or as
$$
\mathcal{M}(z) =
\mathcal{M}_P(z) = |z|\int_0^{\infty} e^{-|z|s}\psi(s)\, ds, \qquad z \in \rnp,
$$
for some bounded function $\psi$ on $\rp$. An important special case here is the choice
$\psi(s)=s^{-i\gamma}\slash \Gamma(1-i\gamma)$, $\gamma$ real, producing the multiplier
$\mathcal{M}_W(z) = |z|^{2i\gamma}$ that corresponds to the imaginary power $\Delta_{\lam}^{i\gamma}$
of the Bessel operator.

Riesz transforms in the Bessel setting are formally given, according to a general concept, by
$$
R_m^{\lam} = \partial^m \Delta_{\lam}^{-|m|\slash 2},
$$
where $m \in \mathbb{N}^n$ and $|m|$ is the order of the transform.
To give this definition a strict meaning, we introduce the space
$$
C^{\lam} = \big\{ f \in C^{\infty}(\rnp) : h_{\lam}f \in C_c^{\infty}(\rnp)\big\}.
$$
This space is a dense linear subspace of $L^2(d\mu_{\lam})$.
We have, see \eqref{Bes_spec},
$$
\Delta_{\lam}^{-|m|\slash 2}f = h_{\lam}(|z|^{-|m|}h_{\lam}f), \qquad f \in C^{\lam}.
$$
If $f \in C^{\lam}$, then $\Delta_{\lam}^{-|m|\slash 2}f \in C^{\infty}(\rnp)$ and therefore
$R_m^{\lam}$ is well defined on $C^{\lam}$. Then $R_m^{\lam}$ can be extended uniquely to a bounded
linear operator on $L^2(d\mu_{\lam})$. All these properties will be justified in detail
in Section \ref{sec:riesz}.

We shall treat all the operators
$$
W_*^{\lam}, \; P_*^{\lam}, \; g^{\lam,W}_{m,k,r}, \; g^{\lam,P}_{m,k,r}, \;  
	T^{\lam}_{\mathcal{M}}, \; R^{\lam}_m,
$$
in a unified way, by means of the general Calder\'on-Zygmund theory.
Clearly, the maximal operators and the $g$-functions are not linear. They are, however, associated with
vector-valued linear operators taking values in some Banach spaces $\mathbb{B}$, where
$\mathbb{B} = L^r(t^{(k+|m|\slash 2)r-1}dt)$ in case of $g_{m,k,r}^{\lam,W}$
and $\mathbb{B} = L^r(t^{(k+|m|)r-1}dt)$ in case of $g_{m,k,r}^{\lam,P}$. 
For $W_*^{\lam}$ and $P_*^{\lam}$ we shall, for technical reasons, choose $\mathbb{B}$ not as 
$L^{\infty}(dt)$ but as the closed and separable subspace $C_0\subset L^{\infty}(dt)$ consisting
of all continuous functions f on $\rp$ which have finite limits as $t\to 0^+$ and vanish as $t\to \infty$.
In all the cases
we shall say that the operator is associated with the corresponding Banach space $\mathbb{B}$.
Similarly, the linear operators $T^{\lam}_{\mathcal{M}}$ and $R^{\lam}_m$
will be said to be associated with the Banach space
$\mathbb{B}=\mathbb{C}$. To obtain mapping properties of our operators, we shall prove that they are
Calder\'on-Zygmund operators in the sense that we now explain.

Let $\mathbb{B}$ be a Banach space and let $K(x,y)$ be a kernel defined on 
$\rnp\times\rnp\backslash \{(x,y):x=y\}$ and taking values in $\mathbb{B}$.
We say that $K(x,y)$ is a standard kernel in the sense of the space of homogeneous type
$(\rnp, d\mu_{\lam},|\cdot|)$ if it satisfies the growth estimate
\begin{equation} \label{gr}
\|K(x,y)\|_{\mathbb{B}} \lesssim \frac{1}{\mu_{\lam}(B(x,|x-y|))}
\end{equation}
and the smoothness estimates
\begin{align}
\| K(x,y)-K(x',y)\|_{\mathbb{B}} & \lesssim \frac{|x-x'|}{|x-y|}\, \frac{1}{\mu_{\lam}(B(x,|x-y|))},
\qquad |x-y|>2|x-x'|, \label{sm1}\\
\| K(x,y)-K(x,y')\|_{\mathbb{B}} & \lesssim \frac{|y-y'|}{|x-y|}\, \frac{1}{\mu_{\lam}(B(x,|x-y|))},
\qquad |x-y|>2|y-y'|; \label{sm2}
\end{align}
here $B(x,R)$ denotes the ball in $\rnp$ centered at $x$ and of radius $R$.
When $K(x,y)$ is scalar-valued, i.e. $\mathbb{B}=\mathbb{C}$, the difference conditions \eqref{sm1}
and \eqref{sm2} are implied by the more convenient gradient condition
\begin{equation} \label{grad}
|\nabla_{\! x,y} K(x,y)| \lesssim \frac{1}{|x-y|\mu_{\lam}(B(x,|x-y|))}.
\end{equation}

Suppose that, for some $1<r<\infty$, $T$ is a linear operator assigning to each $f\in L^r(d\mu_{\lam})$
a measurable $\mathbb{B}$-valued function $Tf$ on $\rnp$. Then $T$ is said to be a (vector-valued)
Calder\'on-Zygmund operator in the sense of the space $(\rnp,d\mu_{\lam},|\cdot|)$ associated with
$\mathbb{B}$ if
\begin{itemize}
\item[(a)] $T$ is bounded from $L^r(d\mu_{\lam})$ to $L^r_{\mathbb{B}}(d\mu_{\lam})$,
\item[(b)] there exists a standard $\mathbb{B}$-valued kernel $K(x,y)$ such that
$$
Tf(x) = \int_{\rnp} K(x,y) f(y)\, d\mu_{\lam}(y), \qquad \textrm{a.e.}\; x \notin \support f,
$$
for every $f \in L_c^{\infty}(\rnp)$,
where $L_c^{\infty}(\rnp)$ is the subspace of $L^{\infty}(\rnp)$ of bounded measurable functions
with compact supports.
\end{itemize}
Here integration of $\mathbb{B}$-valued functions is understood in Bochner's sense.
For the theory of Bochner integrals we refer to \cite{Yos}.
The Bochner-Lebesgue spaces $L^p_{\mathbb{B}}(d\mu_{\lam})$, $1\le p \le \infty$,
are defined to consist of all strongly measurable functions $f\colon \rnp \to \mathbb{B}$ such that
$\|f\|_{L^p_\mathbb{B}(d\mu_{\lam})}<\infty$, where
$$
\|f\|_{L^p_{\mathbb{B}}(d\mu_{\lam})} = 
\begin{cases}
\left( \int_{\rnp} \|f(x)\|^p_{\mathbb{B}}\, d\mu_{\lam}(x)\right)^{1\slash p}, & 1 \le p < \infty \\
\essup_{x\in\rnp} \|f(x)\|_{\mathbb{B}}, & p = \infty
\end{cases}.
$$

According to \cite{CW2} and \cite{MS}, we define the atomic Hardy space 
$H_{\mathbb{B},\textrm{at}}^{1,\lam}$ of $\mathbb{B}$-valued functions on $\rnp$ to consist of
these $f \in L^1_{\mathbb{B}}(d\mu_{\lam})$ which admit the atomic decomposition
$$
f = \sum_{j=1}^{\infty} \alpha_j a_j,
$$
the series being convergent in $L^1_{\mathbb{B}}(d\mu_{\lam})$, where $\alpha_j \in \mathbb{C}$ are
scalars such that $\sum_{j=1}^{\infty} |\alpha_j|< \infty$, and $a_j$ are atoms in the sense
we now describe. A strongly measurable $\mathbb{B}$-valued function $a$ is an atom in the context of
$(\rnp,d\mu_{\lam},|\cdot|)$ if
\begin{itemize}
\item[(A1)] there exist $x_0 \in \rnp$ and $r_0>0$ such that $\support a \subset B(x_0,r_0)$,
\item[(A2)] $\|a\|_{L^{\infty}_{\mathbb{B}}(\rnp)} \le 1\slash \mu_{\lam}(B(x_0,r_0))$,
\item[(A3)] $\int_{\rnp} a(x)\, d\mu_{\lam}(x) = 0$.
\end{itemize}
The norm in $H_{\mathbb{B},\textrm{at}}^{1,\lam}$ is defined by
$$
\|f\|_{H_{\mathbb{B},\textrm{at}}^{1,\lam}} = \inf \sum_{j=1}^{\infty} |\alpha_j|,
$$
where the infimum is taken over all atomic decompositions $f=\sum_{j=1}^{\infty}\alpha_j a_j$ of $f$.
Note that the condition (A2) above may be replaced by 
\begin{itemize}
\item[(A2')] $\|a\|_{L^{r}_{\mathbb{B}}(\rnp)} \le 
	(\mu_{\lam}(B(x_0,r_0)))^{1\slash r -1}$ for some fixed $1\le r < \infty$.
\end{itemize}
In the one-dimensional case and for $\mathbb{B}=\mathbb{C}$ the atomic Hardy space in the Bessel
setting was investigated in \cite{BDT,D,YY}.

The space $\textrm{BMO}_{\mathbb{B}}(d\mu_{\lam})$ is defined to
consist of all locally $d\mu_{\lam}$-integrable $\mathbb{B}$-valued functions $f$ on $\rnp$ such that 
$$
\|f\|_{\textrm{BMO}_{\mathbb{B}}(d\mu_{\lam})} = 
\sup \frac{1}{\mu_{\lam}(B)}\int_{B}\|f(x)-f_B\|_{\mathbb{B}}\,d\mu_{\lam}(x) < \infty,
$$
where the supremum is taken over all balls $B$ in $(\rnp,d\mu_{\lam},|\cdot|)$, and $f_B$ is the
mean value of $f$ over $B$, $f_B = \frac{1}{\mu_{\lam}(B)}\int_B f(y)\, d\mu_{\lam}(y)$.
When $\mathbb{B}$ satisfies certain mild conditions, 
the dual of $H_{\mathbb{B},\textrm{at}}^{1,\lam}$ is $\textrm{BMO}_{\mathbb{B}^*}(d\mu_{\lam})$,
with $\mathbb{B}^*$ being the dual of $\mathbb{B}$.

It is well known that a large part of the classical theory of Calder\'on-Zygmund operators remains valid,
with appropriate adjustments, when the underlying space is of homogeneous type and the associated kernels
are vector-valued, see for instance \cite{RRT,RT}. In particular, if $T$ is a Calder\'on-Zygmund
operator in the sense of $(\rnp,d\mu_{\lam},|\cdot|)$ associated with a Banach space $\mathbb{B}$,
then (see \cite[Theorem 1.1]{BFS} and references given there)
\begin{itemize}
\item[(M1)] $T$ extends to a bounded operator from $L^p(wd\mu_{\lam})$ to $L^p_{\mathbb{B}}(wd\mu_{\lam})$,
	for every $1< p < \infty$ and every $w\in A_p^{\lam}$,
\item[(M2)] $T$ extends to a bounded operator from $L^1(wd\mu_{\lam})$ to
 	weak $L^1_{\mathbb{B}}(wd\mu_{\lam})$,
	for every $w\in A_1^{\lam}$,
\item[(M3)] $T$ extends to a bounded operator from $H_{\mathbb{C},\textrm{at}}^{1,\lam}$ to
	$L^1_{\mathbb{B}}(d\mu_{\lam})$,
\item[(M4)] $T$ extends to a bounded operator from $L^{\infty}_c$ to
	$\textrm{BMO}_{\mathbb{B}}(d\mu_{\lam})$.
\end{itemize}
In (M1)-(M4) it is implicitly assumed that $T$ is given initially on dense subspaces being intersections
of the relevant spaces with $L^r(d\mu_{\lam})$.

The main result of the paper reads as follows.
\begin{Th} \label{thm:main}
Let $\lambda \in [0,\infty)^n$, $m \in \mathbb{N}^n$, $k \ge 0$, $k + |m|>0$, $2\le r<\infty$, 
$\mathcal{M} = \mathcal{M}_W$ or $\mathcal{M} = \mathcal{M}_P$. Then each of the operators
$$
W_*^{\lam}, \; P_*^{\lam}, \; g^{\lam,W}_{m,k,r}, \; g^{\lam,P}_{m,k,r}, \; 	
T^{\lam}_{\mathcal{M}}, \; R^{\lam}_m,
$$
can be interpreted as a Calder\'on-Zygmund operator in the sense of the space of homogeneous type
$(\rnp,d\mu_{\lam},|\cdot|)$ associated with a Banach space $\mathbb{B}$, where $\mathbb{B}$
is $C_0$, $C_0$, $L^r(t^{(k+|m|\slash 2)r-1}dt)$, $L^r(t^{(k+|m|)r-1}dt)$, 
$\mathbb{C}$, $\mathbb{C}$, respectively.
\end{Th}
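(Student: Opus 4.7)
The plan is to verify, operator by operator, the two defining conditions of a Calder\'on--Zygmund operator in the sense of $(\rnp,d\mu_{\lam},|\cdot|)$: namely (a) $L^2(d\mu_{\lam})$-boundedness into $L^2_{\mathbb{B}}(d\mu_{\lam})$, and (b) a standard $\mathbb{B}$-valued integral kernel off the diagonal. For (a), I would appeal to general principles: Stein's maximal theorem and Littlewood--Paley--Stein theory for symmetric diffusion semigroups take care of $W_*^{\lam}$, $P_*^{\lam}$, $g^{\lam,W}_{m,k,r}$ and $g^{\lam,P}_{m,k,r}$; the spectral definition and the fact that $\psi$ is bounded handle $T^{\lam}_{\mathcal{M}}$; for $R_m^{\lam}$ the $L^2$ bound on the dense subspace $C^{\lam}$ should follow from the spectral formula $\Delta_{\lam}^{-|m|\slash 2}f=h_{\lam}(|z|^{-|m|}h_{\lam}f)$ combined with integration by parts using the factorization $\Delta_{\lam}=\sum_i D_i^*D_i$, giving $\|R_m^{\lam}f\|_2\lesssim \|f\|_2$ and thus a unique bounded extension.

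The kernel in each case is read off from the heat kernel $W_t^{\lam}(x,y)$. For the maximal operators the $C_0$-valued kernel is $\{W_t^{\lam}(x,y)\}_{t>0}$ (resp.\ $\{P_t^{\lam}(x,y)\}_{t>0}$, obtained by subordination); for the $g$-functions it is $\{\partial_x^m \partial_t^k W_t^{\lam}(x,y)\}_{t>0}$ viewed in the appropriate $L^r$ space in $t$; for $T^{\lam}_{\mathcal{M}_W}$ it is $-\int_0^{\infty}\partial_tW_t^{\lam}(x,y)\psi(t)\,dt$ (after integration by parts, formally) and analogously for $\mathcal{M}_P$; for the Riesz transform it is given by the subordination identity
\[
K_m^{\lam}(x,y)=\frac{1}{\Gamma(|m|\slash 2)}\int_0^{\infty}t^{|m|\slash 2-1}\partial_x^m W_t^{\lam}(x,y)\, dt.
\]
In each case the representation $Tf(x)=\int K(x,y)f(y)\,d\mu_{\lam}(y)$ for $f\in L^{\infty}_c$ and $x\notin\support f$ will be justified by Fubini together with the decay of the heat kernel away from the diagonal, using Lemma \ref{lem:heat} to ensure the integrals converge.

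The crux of the proof is the verification of \eqref{gr}, \eqref{sm1}--\eqref{sm2} (or \eqref{grad} in the scalar case) for each of these kernels. Here I would adopt the technique advertised in the introduction: insert Schl\"afli's Poisson-type representation
\[
z^{-\nu}I_{\nu}(z)=c_{\nu}\int_{-1}^1 e^{zs}(1-s^2)^{\nu-1\slash 2}\,ds,\qquad \nu>-1\slash 2,
\]
for each factor $I_{\lam_i-1\slash 2}(x_iy_i\slash 2t)$ appearing in $W_t^{\lam}(x,y)$. This converts $W_t^{\lam}(x,y)$ into an $n$-fold integral over $[-1,1]^n$ of a Gaussian-like expression
\[
\frac{c_{\lam}}{t^n}\exp\!\Big(-\frac{1}{4t}\q(x,y,s)\Big)\prod_i(1-s_i^2)^{\lam_i-1}\,ds,
\]
with $\q(x,y,s)=\sum_i(x_i^2+y_i^2-2x_iy_is_i)$ a squared ``distance''. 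The modifications needed when some $\lam_i\in[0,1\slash 2)$ (so the Schl\"afli integral is singular or replaced by an analytic continuation) would be handled separately, but essentially all $t$-, $x$- and $y$-derivatives reduce to polynomial-times-Gaussian factors in $t$, which can be estimated uniformly in $s\in[-1,1]^n$. The $\mathbb{B}$-norms of the resulting expressions, after integration in $t$ (or the supremum in $t$, for the maximal operators), reduce to controlling $(t+\q(x,y,s))^{-N}$ against the homogeneous quantity $\mu_{\lam}(B(x,|x-y|))^{-1}$. A standard doubling argument and the elementary bound $\q(x,y,s)\gtrsim |x-y|^2$ on $[-1,1]^n$ then produce \eqref{gr} and \eqref{sm1}--\eqref{sm2}.

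The main obstacle I expect is obtaining the smoothness estimates, especially for the Riesz transforms and for the $g$-functions with large $|m|$, where one differentiates a highly singular kernel and must control cancellation uniformly in the auxiliary variables $s\in[-1,1]^n$ and in the integration in $t\in(0,\infty)$. The Laplace-transform-type multipliers require an additional argument to justify the integration by parts that transfers $\Delta_{\lam}$ (resp.\ $\sqrt{\Delta_{\lam}}$) onto the semigroup, and then to show that $\psi\in L^{\infty}(\rp)$ is enough for the resulting oscillatory-type integral in $t$ to satisfy the required bounds with no loss. Once these technical estimates are in place, properties (M1)--(M4) are automatic from the general theory of vector-valued Calder\'on--Zygmund operators on spaces of homogeneous type.
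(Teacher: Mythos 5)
Your overall architecture (the three steps: $L^r\to L^r_{\mathbb{B}}$ boundedness, identification of the kernel, standard estimates via Schl\"afli's formula) is exactly the paper's, and your kernels are the correct ones. But there are two concrete gaps. First, in Step 1 you lean on general machinery that does not cover the cases at hand. Stein's Littlewood--Paley--Stein theory for symmetric diffusion semigroups controls only the purely time-derivative $g$-functions; it says nothing about $g^{\lam,W}_{m,k,r}$ with $|m|>0$, where spatial derivatives $\partial_x^m$ enter. Likewise, your proposed integration by parts via $\Delta_{\lam}=\sum_i D_i^*D_i$ gives $\|D_if\|_2^2\le\langle\Delta_{\lam}f,f\rangle$ and hence handles only first-order Riesz transforms; for $|m|\ge 2$ the operators $D_i$ and $D_j^*$ do not commute and the argument breaks down. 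The paper's substitute for both is Lemma \ref{decomp_phi}: the decomposition $\partial_x^m\varphi_x^{\lam}(z)=z^m\sum_{j\le m}c_j(xz)^j\varphi_x^{\lam+j}(z)$, which converts $\partial_x^m$ applied to the Hankel kernel into a finite sum of Hankel transforms $h_{\lam+j}$ of shifted order; Plancherel for each $h_{\lam+j}$ on $L^2(d\mu_{\lam+j})$ then yields the $L^2$ bounds for all orders (and the case $2<r<\infty$ of the $g$-functions follows by a H\"older interpolation against $W_*^{\lam}$ using the pointwise bound $t^{k+|m|\slash 2}|\partial_x^m\partial_t^kW_t^{\lam}f|\lesssim W_{2t}^{\lam}|f|$). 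Without some version of this decomposition your Step 1 does not close for $|m|\ge 1$ (Riesz) or $|m|\ge 1$, $r=2$ ($g$-functions).

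Second, in Step 3 you assert that the elementary bound $q(x,y,s)\gtrsim|x-y|^2$ together with doubling produces \eqref{gr}. It does not: that bound gives
$$
\int\big(q(x,y,s)\big)^{-n\slash 2-|\lam|}\,d\Omega_{\lam}(s)\lesssim|x-y|^{-n-2|\lam|},
$$
whereas the target is $\mu_{\lam}(B(x,|x-y|))^{-1}\simeq|x-y|^{-n}\prod_j(x_j+|x-y|)^{-2\lam_j}$, which is \emph{smaller} by the factor $\prod_j\big(|x-y|\slash(x_j+|x-y|)\big)^{2\lam_j}$ and can be smaller by an arbitrarily large margin when the coordinates $x_j$ are large relative to $|x-y|$. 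Recovering the weights $(x_j+|x-y|)^{2\lam_j}$ requires exploiting the averaging in $s$ over $[-1,1]^n$ against $d\Omega_{\lam}$; this is precisely the content of Lemma \ref{lem:bridge} (imported from \cite[Proposition 5.9]{NS2}), which is the genuine technical heart of the kernel estimates and cannot be replaced by the crude lower bound on $q$. A related omission: for the difference conditions \eqref{sm1}--\eqref{sm2} in the vector-valued cases you need the comparability $q(\theta,y,s)\simeq q(x,y,s)$ along the segment joining $x$ and $x'$ (Lemma \ref{lem:theta}) before Lemma \ref{lem:bridge} can be applied to the gradient evaluated at the intermediate point.
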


Consequently, each of the operators listed above satisfies (M1)-(M4), with appropriately chosen
$\mathbb{B}$ in each case. 
When $\mathbb{B}\neq \mathbb{C}$, these mapping properties can be translated to the following.

\begin{Cor} \label{cor:main}
Let $\lambda \in [0,\infty)^n$, $m \in \mathbb{N}^n$, $k \ge 0$, $k + |m|>0$ and $2\le r<\infty$.
Then the operators 
$W_*^{\lam}, P_*^{\lam}, g^{\lam,W}_{m,k,r}, g^{\lam,P}_{m,k,r}$,
viewed as
scalar-valued operators, satisfy (M1)-(M4) with $\mathbb{B}=\mathbb{C}$. Moreover, the 
resulting extensions are given by the formulas defining initially these operators
in $L^r(d\mu_{\lam})$ (with $r=2$ in case of the maximal operators), 
where the relevant integrals converge. 
\end{Cor}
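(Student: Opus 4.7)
The plan is to deduce Corollary \ref{cor:main} directly from Theorem \ref{thm:main}, exploiting the fact that each of the four scalar-valued operators in question coincides with the pointwise $\mathbb{B}$-norm of the corresponding vector-valued operator provided by Theorem \ref{thm:main}. Concretely, $W_*^{\lam}f(x)=\|W_t^{\lam}f(x)\|_{C_0}$ (once one observes that $t\mapsto W_t^{\lam}f(x)$ lies in $C_0$ for $f\in L^2(d\mu_{\lam})$, so its $L^{\infty}(dt)$ norm equals its pointwise supremum), and analogously $P_*^{\lam}f(x)=\|P_t^{\lam}f(x)\|_{C_0}$, $g_{m,k,r}^{\lam,W}f(x)=\|D^m\partial_t^k W_t^{\lam}f(x)\|_{L^r(t^{(k+|m|\slash 2)r-1}dt)}$, and similarly for $g_{m,k,r}^{\lam,P}$.

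First, I would transfer (M1)--(M4) from the vector-valued statements to the scalar-valued ones. For (M1), (M2) and (M3) this is immediate, since $\|Tf\|_{L^p_{\mathbb{B}}(wd\mu_{\lam})}$ equals the $L^p(wd\mu_{\lam})$-norm of $x\mapsto\|Tf(x)\|_{\mathbb{B}}$. For (M4), the reverse triangle inequality
\[
\big|\|Tf(x)\|_{\mathbb{B}}-\|Tf(y)\|_{\mathbb{B}}\big|\le \|Tf(x)-Tf(y)\|_{\mathbb{B}}
\]
together with the observation that the BMO seminorm is controlled by the mean oscillation against any constant reference value, shows that $F\mapsto \|F(\cdot)\|_{\mathbb{B}}$ continuously embeds $\textrm{BMO}_{\mathbb{B}}(d\mu_{\lam})$ into $\textrm{BMO}(d\mu_{\lam})$. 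Consequently $W_*^{\lam}$, $P_*^{\lam}$, $g^{\lam,W}_{m,k,r}$ and $g^{\lam,P}_{m,k,r}$, viewed as scalar-valued operators, inherit all four mapping properties with $\mathbb{B}=\mathbb{C}$.

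Second, I would identify these extensions with the formulas defining the operators initially. By Lemma \ref{lem:heat}, for any $1\le p<\infty$ and $w\in A_p^{\lam}$, and every $f\in L^p(wd\mu_{\lam})$, the heat and Poisson integrals together with all relevant $x,t$-derivatives converge absolutely and define smooth functions of $(x,t)$; let $\widetilde{W}_*^{\lam}f$, $\widetilde{g}^{\lam,W}_{m,k,r}f$, etc.\ denote the scalar operators thus obtained on the natural domain. On the dense intersection $L^2(d\mu_{\lam})\cap L^p(wd\mu_{\lam})$ (respectively $L^r(d\mu_{\lam})\cap L^p(wd\mu_{\lam})$ in the case of the square functions) the natural operator coincides with the spectral definition, hence with the unique extension delivered by (M1). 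Given an arbitrary $f\in L^p(wd\mu_{\lam})$, I would approximate it by a sequence $\{f_j\}$ in the intersection converging in norm, extract a subsequence along which $W_*^{\lam}f_j\to W_*^{\lam}f$ a.e.\ (from the $L^p(wd\mu_{\lam})$-boundedness just established), and use the pointwise kernel bounds of Theorem \ref{thm:main} together with dominated convergence to obtain $\widetilde{W}_*^{\lam}f_j\to \widetilde{W}_*^{\lam}f$ a.e., thereby identifying the two. The $H^1$ extension is handled analogously using finite atomic sums (each atom lies in $L^2(d\mu_{\lam})$), and for the $L^{\infty}_c\to\textrm{BMO}$ statement no density step is needed at all since $L^{\infty}_c(\rnp)\subset L^2(d\mu_{\lam})$ and the natural formulas apply verbatim.

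The main obstacle, such as it is, is the pointwise a.e.\ identification in the weighted $L^p$ setting, which rests on the subsequence extraction and requires the kernel estimates of Theorem \ref{thm:main} to dominate the integrands uniformly in $j$; once this is in place, the corollary follows as a soft transfer from the vector-valued assertions.
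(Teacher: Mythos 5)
Your overall strategy coincides with the paper's: the first assertion is obtained by composing the vector-valued mapping properties of Theorem \ref{thm:main} with the norm map $F\mapsto\|F(\cdot)\|_{\mathbb{B}}$ (your treatment of (M1)--(M3) and the BMO transfer via the reverse triangle inequality are fine), and the second by a density argument; the paper likewise disposes of the first statement as ``straightforward'' and refers the identification of the extensions to standard arguments based on Remark \ref{rem:conv}. However, the way you carry out the identification has a concrete gap. The standard kernel estimates of Theorem \ref{thm:main}, i.e. $\|K(x,y)\|_{\mathbb{B}}\lesssim \mu_{\lam}(B(x,|x-y|))^{-1}$, cannot serve as dominating functions for your dominated convergence step: $y\mapsto\mu_{\lam}(B(x,|x-y|))^{-1}$ behaves like $|x-y|^{-n}$ near $y=x$, hence is not locally $d\mu_{\lam}$-integrable there, and $f_j-f$ is not supported away from $x$. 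The bounds that actually make the relevant integrals converge are the Gaussian-type ones from the proof of Lemma \ref{lem:heat}, namely $|\partial_x^m\partial_t^kW_t^{\lam}(x,y)|\lesssim e^{-c|y|^2}$ for $(x,t)$ ranging over compact subsets of $\rnp\times\rp$; these are exactly what underlies Remark \ref{rem:conv}, but they only yield convergence $\partial_x^m\partial_t^kW_t^{\lam}f_j(x)\to\partial_x^m\partial_t^kW_t^{\lam}f(x)$ for each \emph{fixed} $t$, with constants degenerating as $t\to0^+$ or $t\to\infty$.

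This leads to the more serious point: pointwise-in-$t$ convergence does not imply convergence of $\sup_{t>0}|\cdot|$ nor of the $L^r(t^{(k+|m|\slash 2)r-1}dt)$-norms, so the claimed a.e. convergence $\widetilde{W}_*^{\lam}f_j\to\widetilde{W}_*^{\lam}f$ does not follow as stated. The standard repair (the one behind the references to \cite{NS2} and \cite{Szarek} in the paper's proof) is to work at the level of the $\mathbb{B}$-valued extension: from the vector-valued (M1)/(M2) one gets, along a subsequence, $\{W_t^{\lam}f_j(x)\}_{t>0}\to \widetilde{\mathcal{W}}^{\lam}f(x)$ for a.e. $x$ in the norm of $\mathbb{B}$; since norm convergence in $C_0$ (resp. in the weighted $L^r(dt)$ space, after a further subsequence) implies convergence at every $t$ (resp. at a.e. $t$), Remark \ref{rem:conv} identifies the limit element of $\mathbb{B}$ with $\{W_t^{\lam}f(x)\}_{t>0}$ for a.e. $x$, and only then does one take $\mathbb{B}$-norms to recover the natural scalar formula. (For the maximal operators one can alternatively exploit positivity of the heat kernel and monotone approximation.) Some such bridge between fixed-$t$ convergence and the supremum or integral in $t$ is needed and is missing from your write-up.
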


\begin{proof}
The first statement is straightforward. The second one is justified by standard arguments, 
with the aid of Remark \ref{rem:conv} below;
see for instance the proofs of \cite[Theorem 2.1]{NS2} and \cite[Corollary 2.5]{Szarek}.
\end{proof}

The proof of Theorem \ref{thm:main} will be given in Section \ref{sec:proof}. 
Treatments of each of the operators are naturally divided into the following three steps.
\begin{itemize}
\item[\textbf{Step 1.}] The operator is bounded from $L^r(d\mu_{\lam})$ to $L^r_{\mathbb{B}}(d\mu_{\lam})$
	for some $1<r<\infty$.
\item[\textbf{Step 2.}] The operator is associated with an integral kernel in the sense of (b) above.
\item[\textbf{Step 3.}] The kernel satisfies the standard estimates \eqref{gr}, \eqref{sm1} and \eqref{sm2}.
\end{itemize}

In the next section we gather various facts and lemmas that will be needed in the proof of 
Theorem \ref{thm:main}.

\section{Preparatory facts and results} \label{sec:prep}

The modified Bessel function $I_{\nu}$ has the following Poisson-type integral representation
obtained by Schl\"afli, see \cite[Chapter VI, Section 6$\cdot$15]{Wat}. For $\nu > -1\slash 2$,
\begin{equation} \label{Schlafli}
I_{\nu}(z) = z^{\nu} \int_{-1}^1 e^{-zs}\, d\Omega_{\nu+1\slash 2}(s), \qquad z >0,
\end{equation}
where $\Omega_{\eta}$ is the measure on $[-1,1]$ given by the density
$$
d\Omega_{\eta}(s) = \frac{(1-s^2)^{\eta-1}\, ds}{\sqrt{\pi}2^{\eta-1\slash 2}\Gamma(\eta)}, \qquad \eta>0.
$$
In the limit case $\eta=0$ we put $\Omega_0 = (\delta_{-1}+\delta_{1})\slash \sqrt{2\pi}$, where
$\delta_{-1}$ and $\delta_1$ are the point masses at $-1$ and $1$, respectively, so that \eqref{Schlafli}
holds also for $\nu=-1\slash 2$. Then, for any $\lambda \in [0,\infty)^n$, the Bessel heat kernel can be
written as 
\begin{equation} \label{heatform}
W_t^{\lam}(x,y) = \frac{1}{(2t)^{n\slash 2 +|\lam|}} \int \exp\Big( -\frac{1}{4t}q(x,y,s)\Big)
	\, d\Omega_{\lam}(s), \qquad x,y \in \rnp, \quad t>0,
\end{equation}
where $|\lam|=\lam_1+\ldots+\lam_n$, the function $q$ is given by
$$
q(x,y,s) = |x|^2+|y|^2 + 2\sum_{j=1}^n x_j y_j s_j, \qquad x,y \in \rnp, \quad s \in [-1,1]^n,
$$
and $\Omega_{\lam}$ denotes the product measure $\bigotimes_{i=1}^n \Omega_{\lam_i}$
on the cube $[-1,1]^n$.

The following result is a crucial point in our method of estimating kernels.
It relates expressions involving certain integrals with respect to $d\Omega_{\lam}$ with the
standard estimates for the space $(\rnp, d\mu_{\lam}, |\cdot|)$. 
\begin{Lem}[{\cite[Proposition 5.9]{NS2}}] \label{lem:bridge}
Assume that $\lam \in [0,\infty)^n$. Then
\begin{align*}
\int \big( q(x,y,s) \big)^{-n\slash 2 - |\lam|} \, d\Omega_{\lam}(s) & \lesssim
	\frac{1}{\mu_{\lam}(B(x,|x-y|))}, \qquad x \neq y, \\
\int \big( q(x,y,s) \big)^{-n\slash 2 - |\lam|-1\slash 2} \, d\Omega_{\lam}(s) & \lesssim
	\frac{1}{|x-y|\mu_{\lam}(B(x,|x-y|))}, \qquad x \neq y.
\end{align*}
\end{Lem}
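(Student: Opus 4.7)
The plan is to reduce both estimates to a single factored bound via a weighted arithmetic--geometric mean (AM--GM) inequality applied to $q$, and to match the outcome with the doubling profile of $\mu_{\lam}$. A short direct calculation (splitting the coordinates into the regimes $x_j\ge r$ and $x_j<r$) yields the well-known equivalence
$$
\mu_{\lam}(B(x,r)) \simeq r^n \prod_{j=1}^n (x_j+r)^{2\lam_j}, \qquad x\in\rnp,\; r>0,
$$
so the targets become $|x-y|^{-n}\prod_j(x_j+|x-y|)^{-2\lam_j}$ and $|x-y|^{-n-1}\prod_j(x_j+|x-y|)^{-2\lam_j}$, respectively.

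Next, I substitute $u_j=1+s_j\in[0,2]$, under which
$$
q(x,y,s) = |x-y|^2 + \sum_{j=1}^n B_j u_j, \qquad B_j:=2x_jy_j,
$$
and the density of $\Omega_{\lam_j}$ is proportional to $u_j^{\lam_j-1/2}(2-u_j)^{\lam_j-1/2}$ (the $\lam_j=0$ case is handled by the same convention, the atoms at $u_j\in\{0,2\}$ being harmless since that coordinate contributes trivially to $\prod(x_j+|x-y|)^{-2\lam_j}$). Partition the indices into
$$
L=\{j:x_jy_j>|x-y|^2\}, \qquad S=\{1,\ldots,n\}\setminus L.
$$
For $j\in S$, I use $q\ge|x-y|^2$ and the finiteness of $\Omega_{\lam_j}$. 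For $j\in L$, apply weighted AM--GM,
$$
q(x,y,s) \gtrsim |x-y|^{2c_0} \prod_{j\in L}(B_j u_j)^{c_j},
$$
valid for any convex weights $c_0+\sum_{j\in L}c_j=1$. Denoting by $\alpha$ the exponent of interest (either $n/2+|\lam|$ or $n/2+|\lam|+1/2$), I choose $c_j=\lam_j/\alpha$ for $j\in L$ and $c_0 = 1 - \alpha^{-1}\sum_{j\in L}\lam_j > 0$ (the positivity uses $\alpha>|\lam|$). Then the singular exponent $u_j^{-c_j\alpha + \lam_j - 1/2} = u_j^{-1/2}$ is integrable at $u_j=0$.

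Raising the AM--GM bound to the $-\alpha$ power and integrating against $d\Omega_{\lam}$ produces
$$
\int q(x,y,s)^{-\alpha}\,d\Omega_{\lam}(s) \lesssim |x-y|^{-2c_0\alpha}\prod_{j\in L}(x_jy_j)^{-\lam_j}.
$$
The final matching step rests on $|x_j-y_j|\le|x-y|$: for $j\in L$ one checks $x_jy_j\simeq(x_j+|x-y|)^2$, while for $j\in S$ one has $x_j+|x-y|\simeq|x-y|$. Combining these equivalences with $2c_0\alpha = n+2\sum_{j\in S}\lam_j$ when $\alpha=n/2+|\lam|$ (respectively $2c_0\alpha = n+1+2\sum_{j\in S}\lam_j$ when $\alpha=n/2+|\lam|+1/2$), the right-hand side collapses to $\mu_{\lam}(B(x,|x-y|))^{-1}$ (respectively $|x-y|^{-1}\mu_{\lam}(B(x,|x-y|))^{-1}$), as required. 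The principal technical obstacle is precisely this anisotropic geometric comparison on $L$ and $S$; once it is secured, the rest amounts to routine bookkeeping in weighted AM--GM.
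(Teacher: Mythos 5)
The paper does not actually prove this lemma; it imports it verbatim from \cite[Proposition 5.9]{NS2}, so the comparison below is with the argument used there. Your overall skeleton is the right one: the reduction to the factored target $|x-y|^{-n-2\epsilon}\prod_j(x_j+|x-y|)^{-2\lambda_j}$ via $\mu_{\lambda}(B(x,R))\simeq R^n\prod_j(x_j+R)^{2\lambda_j}$, the substitution $u_j=1+s_j$ giving $q=|x-y|^2+\sum_j 2x_jy_ju_j$, the partition into $L$ and $S$, and the geometric comparisons $x_jy_j\simeq(x_j+|x-y|)^2$ on $L$ and $x_j+|x-y|\simeq|x-y|$ on $S$ are all correct. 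But there is a genuine gap at the central step.

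The density of $\Omega_{\lambda_j}$ is $(1-s_j^2)^{\lambda_j-1}$, so in the variable $u_j$ it is proportional to $u_j^{\lambda_j-1}(2-u_j)^{\lambda_j-1}$, not $u_j^{\lambda_j-1/2}(2-u_j)^{\lambda_j-1/2}$. With the correct exponent, your choice $c_j=\lambda_j/\alpha$ makes the integrand behave like $u_j^{-c_j\alpha+\lambda_j-1}=u_j^{-1}$ near $u_j=0$, so after the AM--GM step the bound you integrate is $+\infty$ whenever some $j\in L$ has $\lambda_j>0$; the argument then proves nothing. The failure is not cosmetic: lowering $c_j$ to $(1-\delta)\lambda_j/\alpha$ restores integrability but raises $2c_0\alpha$ by $2\delta\sum_{j\in L}\lambda_j$, leaving an uncontrolled factor $\prod_{j\in L}\big(x_jy_j/|x-y|^2\big)^{\delta\lambda_j}$, which is unbounded on $L$. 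The root cause is that weighted AM--GM discards precisely the additive term that regularizes the endpoint: in $\int_0^2 u^{\lambda-1}(2-u)^{\lambda-1}(A+Bu)^{-\gamma}\,du$ the singularity at $u=0$ is cut off at scale $A/B$, and the sharp bound $\lesssim A^{-(\gamma-\lambda)}(A+B)^{-\lambda}$ (valid for $\gamma>\lambda\ge 0$) loses exactly a logarithm if $A+Bu$ is replaced by a geometric mean of $A$ and $Bu$. The repair --- and essentially what \cite{NS2} does --- is to prove this one-dimensional estimate directly by splitting the $u$-integral at $\min(A/B,2)$, and then iterate it coordinate by coordinate with $A=|x-y|^2+\sum_{i\neq j}2x_iy_iu_i$ and $\gamma$ decreasing by $\lambda_j$ at each step; this yields $\int q^{-\alpha}\,d\Omega_{\lambda}(s)\lesssim |x-y|^{-2(\alpha-|\lambda|)}\prod_j(|x-y|^2+x_jy_j)^{-\lambda_j}$, after which your matching step $(|x-y|^2+x_jy_j)^{-\lambda_j}\lesssim(x_j+|x-y|)^{-2\lambda_j}$ finishes the proof.
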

At this point it is perhaps interesting to observe that, see \cite[Proposition 3.2]{NS2}, 
$$
\mu_{\lam}(B(x,R)) \simeq R^n \prod_{j=1}^n (x_j + R)^{2\lambda_j}, \qquad x \in \rnp, \quad R>0.
$$

The result below will come into play when proving the smoothness estimates \eqref{sm1} and \eqref{sm2}
in cases when $\mathbb{B}\neq \mathbb{C}$. It will enable us to reduce the difference conditions
to certain gradient estimates, which are easier to verify.
\begin{Lem}[{\cite[Lemma 4.5]{Szarek}}] \label{lem:theta}
If $x,x',y\in \rnp$ are such that $|x-y|>2|x-x'|$ and $\theta = \alpha x + (1-\alpha) x'$ for some
$\alpha \in [0,1]$, then
$$
\frac{1}{4} q(x,y,s) \le q(\theta,y,s) \le 4 q(x,y,s), \qquad s \in [-1,1]^n.
$$
The same holds after exchanging the roles of $x$ and $y$.
\end{Lem}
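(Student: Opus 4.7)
The plan is to exploit the algebraic structure of $q$, after which the statement reduces to an essentially one-line Lipschitz computation. Writing $s\cdot y := (s_1 y_1, \ldots, s_n y_n)$, I would first rewrite
\[
q(x,y,s) = |x + s\cdot y|^2 + \bigl(|y|^2 - |s\cdot y|^2\bigr),
\]
where the second summand is nonnegative (since $|s_j|\le 1$) and independent of $x$. Hence $\sqrt{q(x,y,s)}$ is the Euclidean norm in $\mathbb{R}^{n+1}$ of the vector $\bigl(x + s\cdot y,\, (|y|^2 - |s\cdot y|^2)^{1/2}\bigr)$, and only the first $n$ entries depend on $x$.

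The reverse triangle inequality then delivers the key Lipschitz bound
\[
\bigl|\sqrt{q(\theta,y,s)} - \sqrt{q(x,y,s)}\bigr| \le |\theta - x|.
\]
Next I would combine this with the trivial lower estimate $q(x,y,s) \ge |x-y|^2$, which is immediate from $s_j \ge -1$. Since $\theta$ lies on the segment from $x$ to $x'$, we have $|\theta - x| \le |x - x'| < |x-y|/2 \le \tfrac12 \sqrt{q(x,y,s)}$, so
\[
\tfrac12 \sqrt{q(x,y,s)} \le \sqrt{q(\theta,y,s)} \le \tfrac32 \sqrt{q(x,y,s)}.
\]
Squaring yields the asserted two-sided bound (with the sharper constant $9/4$ in place of $4$ on the right). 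The second assertion of the lemma (after exchanging the roles of $x$ and $y$) follows from the symmetry $q(x,y,s) = q(y,x,s)$.

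I do not foresee any serious obstacle: the only nonroutine step is noticing the $\ell^2$-type decomposition of $q$ that realizes $\sqrt{q(\cdot,y,s)}$ as a translate of a Euclidean norm, and once this is in hand everything else is elementary.
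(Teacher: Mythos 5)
Your proof is correct and complete. The paper itself offers no proof of this lemma (it is quoted verbatim from \cite[Lemma 4.5]{Szarek}), and your argument --- rewriting $q(x,y,s)=|x+s\cdot y|^{2}+\sum_{j}y_{j}^{2}(1-s_{j}^{2})$ so that $\sqrt{q(\cdot,y,s)}$ is a translated Euclidean norm in $\mathbb{R}^{n+1}$, hence $1$-Lipschitz in $x$, and combining this with $q(x,y,s)\ge |x-y|^{2}$ --- is exactly the standard mechanism behind such comparison estimates in the Laguerre and Bessel literature; it even delivers the slightly sharper upper constant $9/4$ in place of $4$.
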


The following technical result will be used repeatedly while showing the relevant kernel estimates.
To prove it, and also for future use, we introduce some additional notation.
Given two multi-indices $m,M \in \mathbb{N}^n$, the relation $M \le m$ means that $M_i\le m_i$ for
all $i=1,\ldots,n$. We write $(\partial_x q)^{m}$ to denote the quantity
$(\partial_{x_1}q)^{m_1} \cdot \ldots \cdot (\partial_{x_n}q)^{m_n}$.
\begin{Lem} \label{lem:estexp}
Let $A>0$, $m,r \in \mathbb{N}^n$ and $k \in \mathbb{N}$ be fixed. Then
$$
\bigg| \partial_t^k\partial_x^m \partial_y^r \bigg( t^{-A} \exp\Big( -\frac{1}{4t} q(x,y,s)\Big)\bigg)
	\bigg| \lesssim t^{-A-k-(|m|+|r|)\slash 2} \exp\Big( -\frac{1}{8t} q(x,y,s)\Big),
$$
uniformly in $x,y \in \rnp$, $t>0$ and $s\in [-1,1]^n$.
\end{Lem}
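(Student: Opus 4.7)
The plan is to exploit two structural features of the integrand $t^{-A}\exp(-q(x,y,s)/(4t))$: (i) $q(x,y,s)$ is a polynomial of degree $2$ in $(x,y)$ independent of $t$, so only first- and second-order partials of $q$ in $x$ or $y$ are nonzero; and (ii) the first-order partials of $q$ are pointwise controlled by $\sqrt{q}$ itself. The key elementary bound, which I would record first, is
$$
(x_i + y_i s_i)^2 \;\le\; x_i^2 + 2 x_i y_i s_i + y_i^2 \;\le\; q(x,y,s), \qquad s \in [-1,1]^n,
$$
where the first inequality uses $s_i^2 \le 1$ and the second uses that each summand $x_j^2 + y_j^2 + 2 x_j y_j s_j$ of $q$ is non-negative. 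The same estimate holds with the roles of $x$ and $y$ swapped, so $|\partial_{x_i} q|, |\partial_{y_i} q| \le 2\sqrt{q}$. Combined with the elementary inequality $u^N e^{-u} \lesssim_N e^{-u/2}$ for $u \ge 0$, this bound will let me absorb polynomial factors in $q$ into the Gaussian at the expense of replacing the exponent $1/(4t)$ by $1/(8t)$.

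Since $\partial_t$, $\partial_{x_i}$ and $\partial_{y_j}$ all commute, I can carry the derivatives out in a convenient order: first the spatial ones, then the temporal one. Because $-q/(4t)$ is a degree-$2$ polynomial in $(x,y)$, Fa\`a di Bruno applied to $\partial_x^m \partial_y^r \exp(-q/(4t))$ yields a finite sum over set partitions $\pi$ of the $(|m|+|r|)$-element multiset of differentiations into blocks of size $1$ or $2$ only. If $\pi$ has $n_1$ size-$1$ and $n_2$ size-$2$ blocks, each size-$1$ block contributes a factor of the form $-(x_k+y_ks_k)/(2t)$ or $-(y_k+x_ks_k)/(2t)$, while each size-$2$ block contributes a bounded constant times $1/t$. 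Thus each summand has the form $C_\pi\, Q(x,y,s)\, t^{-(n_1+n_2)}\exp(-q/(4t))$ with $|Q| \lesssim q^{n_1/2}$ and $n_1 + 2n_2 = |m|+|r|$. Rewriting $q^{n_1/2}/t^{n_1+n_2} = (q/t)^{n_1/2}/t^{(|m|+|r|)/2}$ and absorbing $(q/t)^{n_1/2}$ into the exponential via the $u^N e^{-u}$ trick yields the intermediate bound $|\partial_x^m \partial_y^r(t^{-A}\exp(-q/(4t)))| \lesssim t^{-A-(|m|+|r|)/2}\exp(-q/(8t))$.

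Finally, I apply $\partial_t^k$ to each spatial term, treating the polynomial factor $Q$ as a constant in $t$. A direct induction on $k$ gives $\partial_t^k(t^{-\alpha} e^{-c/t}) = t^{-\alpha - k}P_{k,\alpha}(c/t)\, e^{-c/t}$ for some polynomial $P_{k,\alpha}$ of degree $k$. Setting $c = q/4$ and absorbing both $Q$ and $P_{k,\alpha}(q/(4t))$ into the Gaussian via the same trick contributes exactly the missing factor $t^{-k}$ while preserving the exponent $\exp(-q/(8t))$. I expect the main obstacle to be the careful bookkeeping in the Fa\`a di Bruno step, specifically verifying the relation $n_1 + 2n_2 = |m|+|r|$ and that the powers of $q$, $t$, and the exponential combine precisely as claimed; once the key estimate $|\partial_{x_i} q|, |\partial_{y_i} q| \le 2\sqrt{q}$ is established, no genuine conceptual difficulty remains.
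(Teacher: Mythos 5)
Your proof is correct and follows essentially the same route as the paper's: both rest on the key bound $|\partial_{x_i}q|,|\partial_{y_i}q|\le 2\sqrt{q}$ (equivalently $|a+bs|\le(a^2+b^2+2abs)^{1/2}$), an explicit finite expansion of the derivatives into terms of the form (power of $t$)$\times$(powers of $q$ and its first partials)$\times\exp(-q/(4t))$, and absorption of the resulting $(q/t)$-powers into the exponential at the cost of halving the exponent. The only difference is organizational — you package the spatial differentiation via Fa\`a di Bruno over partitions into blocks of size one and two, while the paper derives the same expansion by induction on the order of differentiation — and this does not change the substance of the argument.
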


\begin{proof}
Taking into account the inequality
$$
|a+bs| \le (a^2+b^2+2abs)^{1\slash 2}, \qquad a,b \ge 0, \quad s \in [-1,1],
$$
we see that it is sufficient to show that
\begin{align*}
& \partial_t^k\partial_x^m \partial_y^r \bigg( t^{-A} \exp\Big( -\frac{1}{4t} q(x,y,s)\Big)\bigg)
= \sum C_{j,M,R}(s)\, t^{-A-k-j-(|m|+|r|+|M|+|R|)\slash 2} \\
&	\qquad \times \big(\partial_x q(x,y,s)\big)^M
	\big( \partial_y q(x,y,s)\big)^{R} \big(q(x,y,s)\big)^{j} \exp\Big( -\frac{1}{4t} q(x,y,s)\Big),
\end{align*}
where the (finite) summation runs over $0 \le j \le k$, $M \le m$, $R \le r$, and $C_{j,M,R}$
are polynomials.

Proceeding inductively, we arrive at the formula
\begin{equation} \label{p1}
\partial_t^{k} \bigg( t^{-A} \exp\Big( -\frac{1}{4t} q(x,y,s)\Big) \bigg)
 = \sum_{0 \le j \le k} \alpha_{j,k} \, t^{-A-k-j} \big( q(x,y,s) \big)^{j}
 \exp\Big( -\frac{1}{4t} q(x,y,s)\Big),
\end{equation}
where $\alpha_{j,k} \in \mathbb{R}$. On the other hand, given $i,j=1,\ldots,n$,
another inductive reasoning leads to
\begin{align} \label{p2}
\partial_{x_i}^{m_i} \partial_{y_j}^{r_j} \exp\Big( -\frac{1}{4t} q(x,y,s)\Big)  = &
\sum_{\substack{0 \le M_i \le m_i\\ 0 \le R_j \le r_j}} 
\beta_{M_i,R_j}(s_i,s_j) \, t^{-(m_i+r_j+M_i+R_j)\slash 2} \\ & \quad \times
\big( \partial_{x_i}q(x,y,s)\big)^{M_i} \big(\partial_{y_j} q(x,y,s)\big)^{R_j}
\exp\Big( -\frac{1}{4t} q(x,y,s)\Big), \nonumber
\end{align}
where
$\beta_{M_i,R_j}(s_i,s_j)$ are polynomials of two variables (some of them being null polynomials).

The conclusion follows by combining \eqref{p1} with \eqref{p2}.
\end{proof}

The next result provides a useful decomposition of $\partial_x^{m}\varphi_x^{\lam}$.
This will be necessary to prove $L^2$-boundedness of operators involving
higher order `horizontal' derivatives, that is the $g$-functions and the Riesz transforms.
\begin{Lem} \label{decomp_phi}
Let $\lam \in [0,\infty)^n$. Given $m \in \mathbb{N}^n$, there exist numbers $c_j \in \mathbb{R}$, 
$j \le m$, such that
\begin{equation} \label{decomp}
\partial_x^{m} \varphi_x^{\lam}(z) = z^m \sum_{j \le m} c_j (xz)^j \varphi_x^{\lambda+j}(z),
	\qquad x,z \in \rnp,
\end{equation}
with the notation $z^m = z_1^{m_1}\cdot \ldots \cdot z_n^{m_n}$ and
$(xz)^{j} = (x_1 z_1)^{j_1}\cdot \ldots \cdot (x_n z_n)^{j_n}$.
\end{Lem}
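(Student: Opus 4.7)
The plan is to exploit the product structure of $\varphi_x^{\lam}(z)$ and reduce the claim to a one-dimensional identity, then prove the latter by induction on the order of derivative. Writing $\psi_\nu(u, v) := (uv)^{-\nu + 1/2} J_{\nu - 1/2}(uv)$, the definition gives $\varphi_x^{\lam}(z) = \prod_{i=1}^n \psi_{\lam_i}(x_i, z_i)$, and since $\partial_x^m$ acts factor by factor, it suffices to prove for each coordinate $i$ that
\[
\partial_u^{m_i} \psi_{\lam_i}(u, v) = v^{m_i} \sum_{j_i \le m_i} c_{j_i}^{(i)} (uv)^{j_i} \psi_{\lam_i + j_i}(u, v),
\]
with real constants $c_{j_i}^{(i)}$ depending on $\lam_i$ and $m_i$. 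Multiplying these $n$ identities and setting $c_j = \prod_i c_{j_i}^{(i)}$ then yields \eqref{decomp}.

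The one-dimensional proof rests on two classical Bessel identities. First, the differentiation rule $\frac{d}{dt}[t^{-\alpha} J_\alpha(t)] = -t^{-\alpha} J_{\alpha + 1}(t)$ with $\alpha = \nu - 1/2$, combined with the chain rule, yields
\[
\partial_u \psi_\nu(u, v) = -v\,(uv)\,\psi_{\nu + 1}(u, v).
\]
Second, the three-term recurrence $J_{\nu - 1}(t) + J_{\nu + 1}(t) = (2\nu/t) J_\nu(t)$ (applied after shifting the index by $1/2$) translates into
\[
\psi_\nu(u, v) = (2\nu + 1)\,\psi_{\nu + 1}(u, v) - (uv)^2 \psi_{\nu + 2}(u, v),
\]
which I will use in the rearranged form $\psi_{\nu + 1} = (2\nu + 1)^{-1}[\psi_\nu + (uv)^2 \psi_{\nu + 2}]$. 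The denominators $2\nu + 1$ appearing below are strictly positive, since $\nu \ge 0$ throughout the argument.

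The induction on $m_i$ is straightforward; the base case $m_i = 0$ is trivial. For the inductive step, apply $\partial_u$ to a generic summand $\tilde c_j (uv)^j \psi_{\lam_i + j}$ using the differentiation rule; this produces $v \tilde c_j \bigl[j (uv)^{j-1} \psi_{\lam_i + j} - (uv)^{j+1} \psi_{\lam_i + j + 1}\bigr]$. The second piece already has its $(uv)$-power $j+1$ matched with the shift $\lam_i + (j+1)$ and is of the desired form. The first piece is off by one---the power is $j - 1$ but the shift is $j$---and this is precisely where the recurrence intervenes: applied with $\nu = \lam_i + j - 1$, it rewrites $\psi_{\lam_i + j}$ as a scalar combination of $\psi_{\lam_i + j - 1}$ and $(uv)^2 \psi_{\lam_i + j + 1}$, which restores the balance in both resulting terms. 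Collecting contributions and factoring out $v^{m_i}$ closes the induction.

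The main obstacle is exactly this rebalancing step: naive differentiation alone produces terms whose $(uv)$-power and $\lam$-shift are mismatched by one, and the Bessel three-term recurrence is the precise tool for repairing the mismatch. This also explains why the constants $c_j$ genuinely depend on $\lam$ (they inherit the factors $(2\nu + 1)^{-1}$ introduced by the recurrence), even though the first-order case $m_i = 1$ can be done without invoking the recurrence and hence admits $\lam$-independent constants.
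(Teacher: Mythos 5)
Your proof is correct and follows essentially the same route as the paper's: reduction to one dimension via the tensor product structure, induction on the order of the derivative using $\frac{d}{dt}\bigl(t^{-\alpha}J_{\alpha}(t)\bigr)=-t^{-\alpha}J_{\alpha+1}(t)$, and the three-term recurrence for $J_{\nu}$ to repair the mismatched term $(uv)^{j-1}\psi_{\lam_i+j}$. The only difference is presentational (you state the recurrence in rearranged $\psi$-form before applying it), and your remark that the denominators $2\lam_i+2j-1$ are harmless because the recurrence is only invoked for $j\ge 1$ is a correct and worthwhile observation that the paper leaves implicit.
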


\begin{proof}
By the tensor product structure of $\varphi_x^{\lam}$ it is enough to prove the result
in the one-dimensional case. Then we proceed by induction on $m$.
Using the identity  (cf. \cite[Chapter III, Section 3$\cdot$2]{Wat})
$$
\frac{d}{dz} \big( z^{-\nu} J_{\nu}(z)\big) = - z^{-\nu}J_{\nu+1}(z), 
$$
we get $\partial_x \varphi_x^{\lam}(z) = - z (xz)\varphi_{x}^{\lam+1}(z)$.
Suppose that \eqref{decomp} holds for certain $m \in \mathbb{N}$. Then, with the aid of the recurrence
relation (cf. \cite[Chapter III, Section 3$\cdot$2]{Wat})
$$
J_{\nu}(z) = \frac{z}{2\nu} \big( J_{\nu-1}(z) + J_{\nu+1}(z)\big), \qquad \nu > 0,
$$
it follows that
\begin{align*}
& \partial^{m+1}_x\varphi_x^{\lam}(z) = z^m \bigg( \sum_{j=1}^m j c_j z(xz)^{j-1} \varphi_{x}^{\lam+j}(z)
	- \sum_{j=0}^m c_j z (xz)^{j+1} \varphi_{x}^{\lam+j+1}(z) \bigg) \\
& = z^{m+1} \bigg(\sum_{j=1}^m \frac{j c_j}{2\lam+2j-1}\Big( (xz)^{j-1}\varphi_x^{\lam+j-1}(z)
	+ (xz)^{j+1} \varphi_x^{\lam+j+1}(z) \Big) - \sum_{j=0}^m c_j (xz)^{j+1} \varphi_x^{\lam+j+1}(z)\bigg).
\end{align*}
Thus we conclude that \eqref{decomp} is satisfied with $m$ replaced by $m+1$.
\end{proof}

To prove the remaining two lemmas we will need the estimate
\begin{equation} \label{phiest}
\big|\varphi_x^{\lam}(z)\big| \lesssim 1, \qquad x,z \in \rnp.
\end{equation}
This is easily obtained from the following basic asymptotics for the Bessel function $J_{\nu}$
(cf. \cite[Chapter III, Section 3$\cdot$1 (8), Chapter VII, Section 7$\cdot$21]{Wat}): 
for $\nu > -1$ we have
$$
J_{\nu}(z) \simeq z^{\nu}, \quad z \to 0^+, \qquad
J_{\nu}(z) = \mathcal{O}\Big( \frac{1}{\sqrt{z}}\Big), \quad z \to \infty.
$$

\begin{Lem} \label{lem:heat}
Let $\lam \in [0,\infty)^n$ and assume that $f \in L^p(wd\mu_{\lam})$, 
$w \in A_p^{\lam}$, $1\le p < \infty$. Then the heat integral of $f$,
$$
W_t^{\lam}f(x) = \int_{\rnp} W_t^{\lam}(x,y)f(y)\, d\mu_{\lam}(y), \qquad x \in \rnp, \quad t>0,
$$
is a well-defined $C^{\infty}$ function of $(x,t) \in \rnp \times \rp$. Moreover, given 
$m \in \mathbb{N}^n$ and $k \in \mathbb{N}$, we have 
$$
\partial_x^m \partial_t^k W_t^{\lam}f(x) = \int_{\rnp} \partial_x^m \partial_t^k W_t^{\lam}(x,y)
	f(y)\, d\mu_{\lam}(y), \qquad x \in \rnp, \quad t>0.
$$
Furthermore, if $f \in L^2(d\mu_{\lam})$, then we also have
$$
\partial_x^m \partial_t^k W_t^{\lam}f(x) = \int_{\rnp} \partial_x^m \partial_t^k
	\Big( e^{-t|z|^2} \varphi_x^{\lam}(z)\Big) h_{\lam}f(z) \, d\mu_{\lam}(z), 
	\qquad x \in \rnp, \quad t>0.
$$
\end{Lem}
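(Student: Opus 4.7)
The plan is to combine the pointwise bounds on derivatives of the heat kernel (from Lemmas \ref{lem:estexp} and \ref{lem:bridge}) with basic integrability properties of Muckenhoupt weights, so as to justify passing derivatives through the integral sign. The Hankel transform representation for $L^2$ data will then follow from Parseval's identity combined with Lemma \ref{decomp_phi} and the uniform bound \eqref{phiest}.

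First, from Lemma \ref{lem:estexp} (applied with $r=0$), together with the elementary inequality
$$ q(x,y,s) = |x|^2+|y|^2+2\sum_{j=1}^n x_j y_j s_j \ge |x|^2+|y|^2 - 2\sum_{j=1}^n x_j y_j = |x-y|^2, $$
I would derive
$$ \big|\partial_x^m \partial_t^k W_t^{\lam}(x,y)\big| \lesssim t^{-n/2-|\lam|-k-|m|/2}\, e^{-|x-y|^2/(16t)} \int e^{-q(x,y,s)/(16t)}\, d\Omega_{\lam}(s). $$
Inserting $e^{-u}u^{N} \le C_N$ with $N = n/2 + |\lam|$ inside the remaining integral and applying Lemma \ref{lem:bridge} yields a Gaussian-type bound of the form
$$ \big|\partial_x^m \partial_t^k W_t^{\lam}(x,y)\big| \lesssim \frac{e^{-c|x-y|^2/t}}{t^{k+|m|/2}\, \mu_{\lam}(B(x,|x-y|+\sqrt{t}))}. $$
The key consequence is that on any compact subset of $\rnp \times \rp$, every derivative of $W_t^{\lam}(x,y)$ is dominated by a single function $\Phi(y)$ with Gaussian decay in $y$.

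Second, I would verify that $\Phi(y)|f(y)|$ is $d\mu_{\lam}$-integrable for every $f \in L^p(w\, d\mu_{\lam})$ with $w \in A_p^{\lam}$. For $p>1$, H\"older reduces this to integrability of $\Phi^{p'} w^{-p'/p}$ against $d\mu_{\lam}$; the dual weight $w^{-1/(p-1)}$ belongs to $A_{p'}^{\lam}$, hence is locally integrable, and its at-worst polynomial growth is absorbed by the Gaussian factor in $\Phi$. The case $p=1$ is handled similarly via $w^{-1} \in L^{\infty}_{\textrm{loc}}$ plus the standard tail estimate for $A_1^{\lam}$ weights. Since the pointwise estimates above are locally uniform in $(x,t)$ and hold for derivatives of every order, an iterated application of the dominated convergence theorem then yields both $W_t^{\lam} f \in C^{\infty}(\rnp \times \rp)$ and the asserted interchange of differentiation and integration.

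Third, for $f \in L^2(d\mu_{\lam})$, reading off the first (spectral) line of the heat kernel formula, and using the symmetry $\varphi_z^{\lam}(y) = \varphi_y^{\lam}(z)$ together with $h_{\lam} = h_{\lam}^{-1}$, one obtains
$$ h_{\lam}\big[W_t^{\lam}(x,\cdot)\big](z) = e^{-t|z|^2}\, \varphi_x^{\lam}(z). $$
Parseval's identity for $h_{\lam}$ then delivers the claimed representation of $W_t^{\lam} f(x)$. To differentiate under this second integral, I would use Lemma \ref{decomp_phi} to expand $\partial_x^m \varphi_x^{\lam}(z)$, bound the resulting Bessel factors by \eqref{phiest}, and observe that $\partial_t^k e^{-t|z|^2}$ contributes only a polynomial in $|z|$ times the exponential. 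On any compact subset of $\rnp \times \rp$ with $t \ge t_0 > 0$, this produces a majorant of the form $(1+|z|)^{N}\, e^{-t_0 |z|^2}$, which lies in $L^2(d\mu_{\lam})$; Cauchy-Schwarz against $h_{\lam} f \in L^2(d\mu_{\lam})$ plus dominated convergence completes the argument.

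The main obstacle I anticipate is the weighted integrability verification: one has to argue cleanly that the Gaussian tail of the kernel wins against the (at-worst polynomial) growth permitted by the $A_p^{\lam}$ condition, in a manner that is uniform on compact sets in $(x,t)$ and persists for every order of derivative. The Parseval and differentiation-under-the-integral steps are more routine, provided the pointwise bounds from the first step are in hand with uniformity across all derivatives.
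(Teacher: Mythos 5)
Your proposal is correct and follows essentially the same route as the paper's proof: dominate all derivatives of the kernel, locally uniformly in $(x,t)$, by a Gaussian in $y$; check that this Gaussian is integrable against $|f|\,d\mu_{\lam}$ via H\"older and the at-worst polynomial growth of $A_p^{\lam}$ weights (dyadic rings); iterate the dominated convergence theorem; and obtain the $L^2$ representation from $W_t^{\lam}f=h_{\lam}(e^{-t|z|^2}h_{\lam}f)$ together with Lemma \ref{decomp_phi} and \eqref{phiest}. One minor remark: your intermediate bound involving $\mu_{\lam}(B(x,|x-y|+\sqrt{t}))$ does not follow from Lemma \ref{lem:bridge} as stated (which only controls $\int q^{-n/2-|\lam|}\,d\Omega_{\lam}$ by $1/\mu_{\lam}(B(x,|x-y|))$, a quantity that blows up as $y\to x$), but this detour is unnecessary --- since $\Omega_{\lam}$ is a finite measure, Lemma \ref{lem:estexp} already yields $|\partial_x^m\partial_t^kW_t^{\lam}(x,y)|\lesssim t^{-n/2-|\lam|-k-|m|/2}e^{-|x-y|^2/(8t)}$, which is exactly the locally uniform Gaussian majorant the paper uses.
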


\begin{proof}
Let $E \subset \overline{E}\subset \rnp$ and $F \subset \overline{F} \subset \rp$ be bounded subsets
of $\rnp$ and $\rp$, respectively. Observe first that by \eqref{heatform} and the fact that
$q(x,y,s) \ge |x-y|^2$, $s\in [-1,1]^n$, we have
$$
W_t^{\lam}(x,y) \lesssim \frac{1}{t^{n\slash 2 + |\lam|}} \exp\Big(- \frac{1}{4t}|x-y|^2\Big),
	\qquad x,y \in \rnp, \quad t>0.
$$
This implies the estimate
$$
W_t^{\lam}(x,y) \lesssim e^{-c|y|^2}, \qquad y \in \rnp, \quad x \in E, \quad t \in F,
$$
where $c>0$ is a constant depending, in particular, on $E$ and $F$. Moreover, combining \eqref{heatform}
with Lemma \ref{lem:estexp} we see that, given $m\in \mathbb{N}^n$ and $k \in \mathbb{N}$,
\begin{equation} \label{differ}
\big| \partial_x^m \partial_t^k W_t^{\lam}(x,y) \big| \lesssim 
	e^{-c|y|^2}, \qquad y \in \rnp, \quad x \in E, \quad t \in F.
\end{equation}

Next we observe that the function $y \mapsto e^{-c|y|^2}$ belongs to all $L^p(wd\mu_{\lam})$,
$w\in A_p^{\lam}$, $1\le p < \infty$. This can be easily verified by splitting $\rnp$ into
dyadic `rings',
$$
\rnp = \{x \in \rnp : |x| < 1\} \cup \bigcup_{j=0}^{\infty} \big\{x\in \rnp: 2^{j} \le |x| < 2^{j+1}\big\}
$$
and using the estimate
$$
w(B(0,R)) = \int_{B(0,R)} w(x) \, d\mu_{\lam}(x) \lesssim R^{(n+2|\lam|)p}, \qquad R \ge 1,
$$
see for instance \cite[Section 4]{N}. Similarly one can show that the function 
$y \mapsto e^{-c|y|^2}\slash w(y)$ is essentially bounded if $w \in A_1^{\lam}$, and to do that
one uses the dyadic decomposition above and the estimate
$$
\|\chi_{B(0,R)} w^{-1}\|_{\infty} \lesssim R^{n+2|\lam|}, \qquad R \ge 1,
$$
following from the $A_1^{\lam}$ condition, see \cite[Section 4]{N}.

Now let $f \in L^p(wd\mu_{\lam})$, $w \in A_p^{\lam}$, $1\le p < \infty$.
By the above observations and H\"older's inequality we can write
\begin{align*}
\int_{\rnp} W_t^{\lam}(x,y) |f(y)|\, d\mu_{\lam}(y) & 
	\lesssim \int_{\rnp} e^{-c|y|^2} |f(y)|\, d\mu_{\lam}(y) \\
& \le \|f\|_{L^p(wd\mu_{\lam})} \big\| e^{-c|\cdot|^2}\big\|_{L^{p'}(w^{-p'\slash p}d\mu_{\lam})}, \qquad
	x \in E, \quad t \in F,
\end{align*}
and the last expression is finite since $w^{-p'\slash p} \in A_{p'}^{\lam}$ (for $p=1$ the $L^{p'}$
norm here must be replaced by $\|e^{-c|\cdot|^2}w^{-1}\|_{\infty}$).
Thus $W_t^{\lam}$ is well defined for functions from the weighted $L^p$ spaces.
The fact that $W_t^{\lam}f(x)$ is a smooth function of $(x,t)\in \rnp\times\rp$ and the possibility
of passing with the derivatives $\partial_x^m \partial_t^k$ under the integral sign are proved
inductively, by considering one partial derivative at a time and then using \eqref{differ} and the
dominated convergence theorem.
Here, and also elsewhere, the possibility of applying this theorem for differentiating 
under integral signs is justified with the aid of the Mean Value Theorem. 

Finally, we verify the last statement of the lemma. For $f \in L^2(d\mu_{\lam})$ we have
$$
W_t^{\lam}f(x) = h_{\lam}\big(e^{-t|z|^2}h_{\lam}f\big)(x) =
	\int_{\rnp} e^{-t|z|^2} \varphi_z^{\lam}(x) h_{\lam}f(z) \, d\mu_{\lam}(z), \qquad x \in \rnp.
$$
Further, by Lemma \ref{decomp_phi} and \eqref{phiest}
$$
\big| \partial_x^m \partial_t^k \big( e^{-t|z|^2}\varphi_x^{\lam}(z)\big)\big| \lesssim
	e^{-c|z|^2}, \qquad z \in \rnp, \quad x \in E, \quad t \in F,
$$
for some constant $c>0$, and the function $z \mapsto e^{-c|z|^2}h_{\lam}f(z)$ is integrable
against $d\mu_{\lam}$, as can be easily seen by means of H\"older's inequality and the
$L^2$-boundedness of $h_{\lam}$. In this position we can proceed inductively as before.
\end{proof}

\begin{Rem} \label{rem:conv}
From the proof of Lemma \ref{lem:heat} we can conclude immediately the following convergence
result. Assume that $\lam \in [0,\infty)^n$ and $f_N \to f$ in $L^p(wd\mu_{\lam})$ for some
$w\in A_p^{\lam}$ and $1\le p < \infty$. Let $m \in \mathbb{N}^n$ and $k \in \mathbb{N}$ be given.
Then, with $t>0$ fixed, 
$$
\partial_x^m \partial_t^k W_t^{\lam}f_N \to \partial_x^m \partial_t^k W_t^{\lam}f, \qquad 
	\textrm{as}\;\; N\to \infty,
$$
pointwise, and even uniformly on bounded subsets $E \subset \overline{E}\subset \rnp$.
\end{Rem}

\begin{Lem} \label{lem:hsmooth}
Let $\lam \in [0,\infty)^d$ and assume that $f \in C_c^{\infty}(\rnp)$. 
Then $h_{\lam}f \in C^{\infty}(\rnp)$ and, given $m \in \mathbb{N}^n$,
$$
\partial_z^m h_{\lam}f(z) = \int_{\rnp} \partial_z^m \varphi_z^{\lam}(x)f(x) \, d\mu_{\lam}(x).
$$
\end{Lem}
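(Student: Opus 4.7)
The plan is to reduce everything to pointwise estimates on $\partial_z^m \varphi_z^{\lam}(x)$ that are uniform in $x$ on the support of $f$ and locally uniform in $z$, and then to pass the derivatives under the integral sign one at a time by dominated convergence, in exactly the style already carried out in the proof of Lemma \ref{lem:heat}.

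First, I would exploit the symmetry $\varphi_z^{\lam}(x) = \varphi_x^{\lam}(z)$, which is immediate from the product-of-Bessel-functions expression. Consequently, Lemma \ref{decomp_phi} applied with the roles of $x$ and $z$ interchanged yields
\begin{equation*}
\partial_z^m \varphi_z^{\lam}(x) = x^m \sum_{j\le m} c_j (xz)^j \varphi_z^{\lam+j}(x), \qquad x,z \in \rnp.
\end{equation*}
Combined with the uniform bound \eqref{phiest}, namely $|\varphi_z^{\lam+j}(x)| \lesssim 1$, this gives
\begin{equation*}
\bigl| \partial_z^m \varphi_z^{\lam}(x) \bigr| \lesssim x^m \sum_{j \le m} (xz)^j,
\end{equation*}
uniformly in $x,z \in \rnp$.

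Next, fix a bounded set $F \subset \rnp$ and let $K = \support f$, which is a compact subset of $\rnp$ since $f \in C_c^{\infty}(\rnp)$. For $z \in F$ the bound above shows that
\begin{equation*}
\bigl| \partial_z^m \varphi_z^{\lam}(x) f(x) \bigr| \lesssim \mathbf{1}_K(x)\, \|f\|_\infty \, P_{m,F}(x), \qquad x \in \rnp,
\end{equation*}
where $P_{m,F}$ is a polynomial depending on $m$ and $F$. The right-hand side is integrable against $d\mu_{\lam}$, since $K$ is bounded and the integrand vanishes off $K$.

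With this majorant in hand, differentiation under the integral sign follows inductively, taking one partial derivative $\partial_{z_i}$ at a time: an application of the Mean Value Theorem produces an increment quotient dominated by the above integrable majorant (applied to a slightly larger bounded set containing the relevant segment), and dominated convergence yields
\begin{equation*}
\partial_z^m h_{\lam}f(z) = \int_{\rnp} \partial_z^m \varphi_z^{\lam}(x) f(x) \, d\mu_{\lam}(x).
\end{equation*}
Continuity of the right-hand side in $z$, which gives $h_{\lam}f \in C^{\infty}(\rnp)$, follows from one more application of the dominated convergence theorem using the same majorant. There is no serious obstacle here; the only care needed is in verifying the polynomial bound on $\partial_z^m \varphi_z^{\lam}(x)$, which is precisely what Lemma \ref{decomp_phi} together with \eqref{phiest} deliver.
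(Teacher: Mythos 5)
Your proposal is correct and follows exactly the route the paper intends: its proof of this lemma is a one-line remark that "the arguments are based on Lemma \ref{decomp_phi}, the estimate \eqref{phiest} and the dominated convergence theorem," and you have simply supplied the elementary details (the symmetry $\varphi_z^{\lam}(x)=\varphi_x^{\lam}(z)$, the resulting polynomial majorant on compact sets, and the Mean Value Theorem plus dominated convergence, one derivative at a time) that the authors leave to the reader. No discrepancy to report.
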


\begin{proof}
The arguments are based on Lemma \ref{decomp_phi}, the estimate \eqref{phiest} and the dominated
convergence theorem. We leave elementary details to the reader.
\end{proof}

\section{Proof of Theorem~\ref{thm:main}} \label{sec:proof}

In this section we prove our main result, Theorem \ref{thm:main}.
In the following subsections we treat separately the cases of the maximal operators, the $g$-functions,
the Laplace transform type multipliers and finally the Riesz transforms.

In the sequel we will often omit the arguments and write shortly $\q$ instead of $q(x,y,s)$.
We will also make a frequent use, without further mentioning, of the fact that for $A\ge 0$ and
$B > 0$, $\sup_{t>0} t^A e^{-Bt}= C_{A,B}< \infty$.
We shall sometimes tacitly assume that passing with the differentiation in $x$, or $y$ or $t$,
under the integral against $d\Omega_{\lam}$ or against $dt$ is legitimate;
similarly for changing orders of integrals. This is indeed the case in all the relevant cases,
which may be verified in a straightforward manner by means of the estimates obtained in the proof of
Theorem \ref{thm:main} and the dominated convergence theorem.
On the other hand, in more subtle cases we will always comment in detail operations of this kind.

\subsection{Maximal operators $W_*^{\lam}$ and $P_*^{\lam}$} \quad\\
First, recall that $\{W_t^{\lam}\}$ is a symmetric diffusion semigroup in the sense of Stein's
monograph \cite{Stlitt}. 
Then the maximal theorem \cite[p.\,73]{Stlitt} applies showing the boundedness of $W^{\lam}_*$
on $L^p(d\mu_{\lam})$, $1< p \le \infty$. The same is true for $P^{\lam}_*$, by subordination.
Theorem \ref{thm:main} complements these results by admitting weights and providing some further
mapping properties of the maximal operators, see Corollary \ref{cor:main}.

We treat in detail only the heat semigroup maximal operator and then only indicate how to make the arguments
go through in case of $P^{\lam}_*$.

\noindent \textbf{Step 1.} 
We want to view $W^{\lam}_*$ as a vector-valued operator $\mathcal{W}^{\lam}$,
assigning to any $f \in L^2(d\mu_{\lam})$ the function
$$
\rnp \ni x \mapsto \mathcal{W}^{\lam}f(x) = \{W_t^{\lam}f(x)\}_{t>0},
$$
and bounded from $L^2(d\mu_{\lam})$ to $L^2_{C_0}(d\mu_{\lam})$.
We first ensure that $\mathcal{W}^{\lam}$ indeed takes its values in $L^2_{C_0}(d\mu_{\lam})$.
Here the arguments are analogous to those from the proof of \cite[Theorem 2.1]{NS2}. 
To make them work one needs two ingredients. The first one is the fact that, 
given $f \in L^2(d\mu_{\lam})$ and $x\in \rnp$, 
the function $W_t^{\lam}f(x)$ is continuous in $t\in (0,\infty)$, which we already know 
(see Lemma \ref{lem:heat}).
To get the remaining ingredient, it is enough to check that for $f \in L^2(d\mu_{\lam})$
\begin{equation} \label{limits}
\lim_{t \to 0^+} W_t^{\lam}f(x) = f(x), \qquad \lim_{t\to \infty} W_t^{\lam}f(x)=0, \qquad
	\textrm{a.e.}\;\; x\in \rnp.
\end{equation} 
This, however, is rather straightforward. We have 
$$
W_t^{\lam}f = h_{\lam}(e^{-t|z|^2}h_{\lam}f), \qquad f \in L^2(d\mu_{\lam}), \quad t>0.
$$
Then \eqref{limits} for $f \in C^{\lam}$ follows by the dominated convergence theorem.
Since $C^{\lam}$ is dense in $L^2(d\mu_{\lam})$ (see Section \ref{sec:riesz} below)
and the maximal operator $W_*^{\lam}$ is bounded
on $L^2(d\mu_{\lam})$, standard arguments show that \eqref{limits} holds for $f \in L^2(d\mu_{\lam})$.

Thus $\mathcal{W}^{\lam}$ is a linear mapping from $L^2(d\mu_{\lam})$ to $L^2_{C_0}(d\mu_{\lam})$, and
as such is bounded, by the corresponding property of the scalar-valued operator $W_*^{\lam}$.

\noindent \textbf{Step 2.}
The fact that $\mathcal{W}^{\lam}$ is associated with the vector-valued kernel 
$\{W_t^{\lam}(x,y)\}_{t>0}$ is justified exactly in the same way as the corresponding fact in 
the proof of \cite[Theorem 2.1]{NS2}.

\noindent \textbf{Step 3.}
We prove the standard estimates for the kernel $\{W_t^{\lam}(x,y)\}_{t>0}$.
By \eqref{heatform} we have
\begin{align*}
\|W_t^{\lam}(x,y)\|_{L^{\infty}(dt)} & = \sup_{t>0} \frac{1}{(2t)^{n\slash 2+|\lam|}} \int
	\exp\Big( -\frac{\q}{4t}\Big) \, d\Omega_{\lam}(s) \\
& \le \int \frac{1}{\q^{n\slash 2+|\lam|}} \sup_{t>0} \Big(\frac{\q}{t}\Big)^{n\slash 2+|\lam|}
	\exp\Big( -\frac{\q}{4t}\Big) \, d\Omega_{\lam}(s) \lesssim 
	\int \frac{1}{\q^{n\slash 2+|\lam|}} \, d\Omega_{\lam}(s)
\end{align*}
and the growth estimate \eqref{gr} with $\mathbb{B}=C_0$
follows from Lemma \ref{lem:bridge}.

To show the smoothness estimates \eqref{sm1} and \eqref{sm2} it is enough, by symmetry reasons,
to consider \eqref{sm1}. Then using the Mean Value Theorem we get
$$
\big| W_t^{\lam}(x,y)-W_t^{\lam}(x',y)\big| \le 
	|x-x'| \Big| \nabla_{\! x} W_t^{\lam}(x,y)\big|_{x=\theta}\Big|,
$$
where $\theta$ is a convex combination of $x$ and $x'$ that depends also on $t$.
Thus it suffices to show the estimates
$$
\big\| \partial_{x_i} W_t^{\lam}(x,y)\big|_{x=\theta}\big\|_{L^{\infty}(dt)} \lesssim 
	\frac{1}{|x-y|\mu_{\lam}(B(x,|x-y|))}, \qquad i=1,\ldots,n,
$$
for all $x,x',y$ satisfying $|x-y|>2|x-x'|$.
Using \eqref{heatform} and applying Lemma \ref{lem:estexp} we get
\begin{align*}
|\partial_{x_i} W_t^{\lam}(x,y)| & \le \frac{1}{(2t)^{n\slash 2 + |\lam|}} \int \Big|\partial_{x_i}
	\exp\Big( -\frac{\q}{4t}\Big)\Big| \, d\Omega_{\lam}(s) \\
& \lesssim \frac{1}{t^{n\slash 2+|\lam|+1\slash 2}} \int \exp\Big( -\frac{\q}{8t}\Big) \, d\Omega_{\lam}(s)
\lesssim \int \frac{1}{\q^{n\slash 2+|\lam|+1\slash 2}} \, d\Omega_{\lam}(s).
\end{align*}
Now the desired bound follows by Lemma \ref{lem:theta} and Lemma \ref{lem:bridge}. This finishes Step 3.

Treatment of $P_*^{\lam}$ goes along the same lines. One has to combine the arguments given above
with the subordination principle. In Step 1 the relevant identity is 
$P_t^{\lam}f = h_{\lam}(e^{-t|z|}h_{\lam}f)$, $f \in L^2(d\mu_{\lam})$. 
We leave further details to interested readers.

\subsection{Square functions $g^{\lam,W}_{m,k,r}$ and $g^{\lam,P}_{m,k,r}$} \quad\\
We analyse in detail only the square function $g^{\lam,W}_{m,k,r}$ based on the heat semigroup.
The Poisson semigroup based $g$-function is treated in a similar way, by means of the subordination
principle (see for instance \cite[Section 4.3]{Szarek}), hence the details are omitted.

\noindent \textbf{Step 1.} We interpret $g^{\lam,W}_{m,k,r}$ as the vector-valued operator
$\mathcal{G}^{\lam,W}_{m,k}$, assigning to any $f \in L^r(d\mu_{\lam})$ the function
$$
\rnp \ni x \mapsto \mathcal{G}^{\lam,W}_{m,k}f(x) = \big\{ \partial_x^m \partial_t^k W_t^{\lam}f(x)
	\big\}_{t>0}.
$$
We will show that $\mathcal{G}^{\lam,W}_{m,k}$ is bounded from $L^r(d\mu_{\lam})$ to
$L^r_{\mathbb{B}}(d\mu_{\lam})$, $2\le r < \infty$, where $\mathbb{B}= L^r(t^{(k+|m|\slash 2)r-1}dt)$.
Actually, this task amounts to showing that the $g$-function $g^{\lam,W}_{m,k,r}$ is bounded on
$L^r(d\mu_{\lam})$, $2\le r < \infty$. 

We first deal with the case $r=2$. Let $f\in L^2(d\mu_{\lam})$. By Lemma \ref{lem:heat} and 
Lemma \ref{decomp_phi} we see that
$$
\partial_x^m \partial_t^k W_t^{\lam}f(x) = \sum_{j \le m} c_j \int_{\rnp} (xz)^j \varphi_x^{\lam+j}(z)
	z^m (-1)^k |z|^{2k} e^{-t|z|^2} h_{\lam}f(z)\, d\mu_{\lam}(z).
$$
Then
\begin{align*}
&\big\| g^{\lam,W}_{m,k,2}(f)\big\|^2_{L^2(d\mu_{\lam})}  \\
& = \int_{\rnp} \int_0^{\infty} \bigg| \sum_{j \le m}
	c_j \int_{\rnp} (xz)^j \varphi_x^{\lam+j}(z) z^m (-1)^k |z|^{2k} e^{-t|z|^2} h_{\lam}f(z)\, 
	d\mu_{\lam}(z)\bigg|^2 \,t^{2k+|m|-1}dt\, d\mu_{\lam}(x) \\
& \lesssim 	\sum_{j \le m} \int_0^{\infty} \int_{\rnp} \bigg| 
	 \int_{\rnp} (xz)^j \varphi_x^{\lam+j}(z) z^m (-1)^k |z|^{2k} e^{-t|z|^2} h_{\lam}f(z)\, 
	d\mu_{\lam}(z)\bigg|^2 \,d\mu_{\lam}(x) \,t^{2k+|m|-1}dt  \\
& = \sum_{j \le m} \int_0^{\infty} \int_{\rnp} \big| h_{\lam+j}\big( z^{m-j} |z|^{2k} e^{-t|z|^2}
	 h_{\lam}f\big)(x)\big|^2 \,d\mu_{\lam+j}(x)\, t^{2k+|m|-1}dt.
\end{align*}
Since $h_{\lam+j}$ is an isometry on $L^2(d\mu_{\lam+j})$, this implies
$$
\big\| g^{\lam,W}_{m,k,2}(f)\big\|^2_{L^2(d\mu_{\lam})} \lesssim \int_{\rnp} x^{2m} |x|^{4k}
	\int_0^{\infty} e^{-2t|x|^2} t^{2k+|m|-1}dt \, |h_{\lam}f(x)|^2\, d\mu_{\lam}(x).
$$
The integral in $t$ here is equal $\Gamma(2k+|m|)\slash (2|x|^2)^{2k+|m|}$ and we conclude that
$$
\big\| g^{\lam,W}_{m,k,2}(f)\big\|^2_{L^2(d\mu_{\lam})} 
\lesssim \int_{\rnp} \frac{x^{2m}|x|^{4k}}{|x|^{4k+2|m|}}
	|h_{\lam}f(x)|^2 \, d\mu_{\lam}(x) \le \|h_{\lam}f\|^2_{L^2(d\mu_{\lam})} = \|f\|^2_{L^2(d\mu_{\lam})}.
$$
Notice that when there is no horizontal component, i.e. $m=(0,\ldots,0)$, then 
$\| g^{\lam,W}_{m,k,2}(f)\|_{L^2(d\mu_{\lam})}$ is equal to $\|f\|_{L^2(d\mu_{\lam})}$, up to
a factor independent of $f$.

Considering the boundedness for $2<r<\infty$, it is enough to show that it follows from the 
$L^r$-boundedness of $g^{\lam,W}_{m,k,2}$ and the maximal operator $W^{\lam}_{*}$.
To begin with, observe that by \eqref{heatform} and Lemma \ref{lem:estexp}
$$
\big| \partial_x^m \partial_t^k W_t^{\lam}(x,y)\big| \lesssim \frac{1}{t^{n\slash 2+|\lam|+k+|m|\slash 2}}
	\int \exp\Big( -\frac{\q}{8t}\Big) \, d\Omega_{\lam}(s),
$$
so $t^{k + |m|\slash 2} |\partial_x^m \partial_t^k W_t^{\lam}(x,y)| \lesssim W_{2t}^{\lam}(x,y)$.
Consequently, for suitable $f$,
$$
t^{k+|m|\slash 2} \big| \partial_x^m \partial_t^k W_t^{\lam}f(x)\big| \lesssim W_{2t}^{\lam}|f|(x),
	\qquad x \in \rnp, \quad t>0.
$$
Using this observation and H\"older's inequality we obtain, for $f \in L^r(d\mu_{\lam})$,
\begin{align*}
& \big\| g^{\lam,W}_{m,k,r}(f)\big\|^r_{L^r(d\mu_{\lam})} \\
& = \int_{\rnp} \int_0^{\infty} \big|
	t^{k+|m|\slash 2} \partial_x^m \partial_t^k W_t^{\lam}f(x)\big|^r \, \frac{dt}t \, d\mu_{\lam}(x) \\
& \le \int_{\rnp} \int_0^{\infty} \big|
	t^{k+|m|\slash 2} \partial_x^m \partial_t^k W_t^{\lam}f(x)\big|^2 \, \frac{dt}t \Big( \sup_{t>0}
	t^{k+|m|\slash 2} \big|\partial_x^m \partial_t^k W_t^{\lam}f(x)\big|\Big)^{r-2}\, d\mu_{\lam}(x) \\
& \le \bigg( \int_{\rnp} \bigg( \int_0^{\infty} \big|
	t^{k+|m|\slash 2} \partial_x^m \partial_t^k W_t^{\lam}f(x)\big|^2 \, \frac{dt}t \bigg)^{r\slash 2}\,
	d\mu_{\lam}(x)\bigg)^{2\slash r} \\
& \qquad \times	\bigg( \int_{\rnp} \Big( \sup_{t>0}
	t^{k+|m|\slash 2} \big|\partial_x^m \partial_t^k W_t^{\lam}f(x)\big|\Big)^{r} \, d\mu_{\lam}(x)
	\bigg)^{1-2\slash r} \\
& \lesssim \big\| g^{\lam,W}_{m,k,2}(f)\big\|^2_{L^r(d\mu_{\lam})} \big\|W_*^{\lam}|f|\big\|^{r-2}_{L^r(d\mu_{\lam})}.
\end{align*}
Thus if $g^{\lam,W}_{m,k,2}$ and $W_{*}^{\lam}$ are bounded on $L^r(d\mu_{\lam})$, then so is
$g^{\lam,W}_{m,k,r}$.

\noindent \textbf{Step 2.} We verify the fact that $\mathcal{G}^{\lam,W}_{m,k}$ is associated with
the kernel $\{\partial_x^m \partial_t^k W_t^{\lam}(x,y)\}_{t>0}$. 
Note that the integral connecting $\mathcal{G}^{\lam,W}_{m,k}$ with the kernel must be understood
in Bochner's sense, and the underlying Banach space is $\mathbb{B} = L^r(t^{(k+|m|\slash 2)r-1}dt)$.
By density arguments it suffices to show that
\begin{equation} \label{assoc}
\Big\langle \big\{ \partial_x^m \partial_t^k W_t^{\lam}f \big\}_{t>0}, H \Big\rangle =
\bigg\langle \int_{\rnp} \{\partial_x^m \partial_t^k W_t^{\lam}(x,y)\}_{t>0} f(y)\, d\mu_{\lam}(y),H
\bigg\rangle
\end{equation}
for every $f \in C_c^{\infty}(\rnp)$ and $H(x,t)=H_1(x)H_2(t)$, where $H_1 \in C_c^{\infty}(\rnp)$,
$H_2 \in C_c^{\infty}(\rp)$ and $\support f \cap \support H_1 = \emptyset$. Here 
$\langle \cdot, \cdot \rangle$ means the standard Banach space pairing between 
$L^2_{\mathbb{B}}(d\mu_{\lam})$ and its dual $L^2_{\mathbb{B}^*}(d\mu_{\lam})$, with
$\mathbb{B}^* = L^{r'}(t^{(k+|m|\slash 2)r-1}dt)$.

We start by considering the left-hand side of \eqref{assoc},
\begin{align*}
& \Big\langle \big\{ \partial_x^m \partial_t^k W_t^{\lam}f \big\}_{t>0}, H \Big\rangle \\
& = \int_0^{\infty} t^{(k+|m|\slash 2)r-1} H_2(t) \int_{\rnp} \partial_x^m \partial_t^k
	W_t^{\lam}f(x) H_1(x)\, d\mu_{\lam}(x)\, dt \\
& = \int_0^{\infty} t^{(k+|m|\slash 2)r-1} H_2(t) \int_{\rnp}\int_{\rnp} \partial_x^m \partial_t^k
	W_t^{\lam}(x,y)f(y)\, d\mu_{\lam}(y) \, H_1(x)\, d\mu_{\lam}(x)\, dt.
\end{align*}
The second identity above follows from Lemma \ref{lem:heat}.
The change of the order of integration in the first identity is justified by Fubini's theorem.
Its application is indeed legitimate since by H\"older's inequality
\begin{align*}
& \int_{\rnp} \int_0^{\infty} \big| \partial_x^m \partial_t^k W_t^{\lam} f(x) \big| \, |H_1(x)H_2(t)|
	t^{(k+|m|\slash 2)r-1} \, dt \, d\mu_{\lam}(x)\\
& \le \big\| g^{\lam,W}_{m,k,2}(f)\big\|_{L^2(d\mu_{\lam})} \|H_1\|_{L^2(d\mu_{\lam})}
	\big\| t^{(k+|m|\slash 2)(r-1)-1\slash 2}H_2\big\|_{L^2(dt)},
\end{align*}
and this quantity is finite in view of the $L^2$-boundedness of $g^{\lam,W}_{m,k,2}$.

Now we focus on the right-hand side of \eqref{assoc}.
Interchanging the order of integrals we get
\begin{align*}
& \bigg\langle \int_{\rnp} \{\partial_x^m \partial_t^k W_t^{\lam}(x,y)\}_{t>0} f(y)\, d\mu_{\lam}(y),H
\bigg\rangle \\
& = \int_0^{\infty} t^{(k+|m|\slash 2)r-1} H_2(t) \int_{\rnp}\int_{\rnp} \partial_x^m \partial_t^k
	W_t^{\lam}(x,y)f(y)\, d\mu_{\lam}(y) \, H_1(x)\, d\mu_{\lam}(x)\, dt,
\end{align*}
which confirms that both sides of \eqref{assoc} coincide. Here application of Fubini's theorem is
possible since
\begin{align*}
& \int_{\rnp}\int_{\rnp} \int_0^{\infty} t^{(k+|m|\slash 2)r-1}  \big| \partial_x^m \partial_t^k
	W_t^{\lam}(x,y)f(y) H_1(x)H_2(t)\big|\, dt \, d\mu_{\lam}(y) \, d\mu_{\lam}(x) \\
& \le \|f\|_{\infty} \|H_1\|_{\infty} \|H_2\|_{\mathbb{B}^*}
	\int_{\support H_1} \int_{\support f} \big\| \partial_x^m \partial_t^k W_t^{\lam}(x,y)
		\big\|_{\mathbb{B}} \, d\mu_{\lam}(y) \, d\mu_{\lam}(x) \\
& \lesssim \int_{\support H_1} \int_{\support f} \frac{1}{\mu_{\lam}(B(x,|x-y|))}
	 \, d\mu_{\lam}(y) \, d\mu_{\lam}(x) < \infty,
\end{align*}
where we made use of the growth estimate for the kernel 
$\{\partial_x^m \partial_t^k W_t^{\lam}(x,y)\}_{t>0}$ proved in Step 3 below and the fact that
the supports of $f$ and $H_1$ are disjoint and bounded.

\noindent \textbf{Step 3.} We verify the standard estimates for the kernel 
$\{\partial_x^m \partial_t^k W_t^{\lam}(x,y)\}_{t>0}$ taking values in 
$\mathbb{B}=L^r(t^{(k+|m|\slash 2)r-1}dt)$, where $1\le r < \infty$.
By \eqref{heatform} and Lemma \ref{lem:estexp} we have
$$
\big| \partial_x^m \partial_t^k W_t^{\lam}(x,y)\big| \lesssim 
	\frac{1}{t^{n\slash 2 +|\lam| + k + |m|\slash 2}} \int \exp\Big(-\frac{\q}{8t}\Big)\, d\Omega_{\lam}(s).
$$
This combined with Minkowski's integral inequality gives
$$
\big\| \partial_x^m \partial_t^k W_t^{\lam}(x,y)\big\|_{\mathbb{B}} \lesssim \int \Big\|
	\frac{1}{t^{n\slash 2 +|\lam| + k + |m|\slash 2}}  \exp\Big(-\frac{\q}{8t}\Big)
	\Big\|_{\mathbb{B}}\, d\Omega_{\lam}(s).
$$
The change of variable $t \mapsto \q t$ shows that the norm under the last integral is equal to
$\q^{-n\slash 2-|\lam|}$, up to a multiplicative constant. Thus now the growth estimate
\eqref{gr} follows from Lemma \ref{lem:bridge}.

To show the smoothness estimates, we focus ourselves only on \eqref{sm2}; the reasoning proving
\eqref{sm1} is essentially the same. By the Mean Value Theorem, 
$$
\big| \partial_x^m \partial_t^k W_t^{\lam}(x,y) - \partial_x^m \partial_t^k W_t^{\lam}(x,y') \big| \le
	|y-y'| \Big| \nabla_{\! y} \partial_x^m \partial_t^k W_t^{\lam}(x,y)\big|_{y=\theta}\Big|,
$$
where $\theta$ is a convex combination of $y$ and $y'$ depending also on $t$.
Thus is suffices to show the estimates
$$
\Big\| \partial_{y_i}\partial_x^m \partial_t^k W_t^{\lam}(x,y)\big|_{y=\theta}\Big\|_{\mathbb{B}}
	\lesssim \frac{1}{|x-y|\mu(B(x,|x-y|))}, \qquad i=1,\ldots,n,
$$
for all $x,y,y'$ satisfying $|x-y|>2|y-y'|$. Using \eqref{heatform} and Lemma \ref{lem:estexp}
we see that
$$
\big|\partial_{y_i} \partial_x^m \partial_t^k W_t^{\lam}(x,y)\big| \lesssim 
	\frac{1}{t^{n\slash 2 +|\lam| + k + |m|\slash 2+1\slash 2}} 
		\int \exp\Big(-\frac{\q}{8t}\Big)\, d\Omega_{\lam}(s).
$$
This together with Lemma \ref{lem:theta} produces (under assumption $|x-y|>2|y-y'|$)
$$
\Big|\partial_{y_i} \partial_x^m \partial_t^k W_t^{\lam}(x,y)\big|_{y=\theta}\Big| \lesssim 
	\frac{1}{t^{n\slash 2 +|\lam| + k + |m|\slash 2+1\slash 2}} 
		\int \exp\Big(-\frac{\q}{32t}\Big)\, d\Omega_{\lam}(s).
$$
From here we proceed as in the proof of the growth estimate above. This leads to
$$
\Big\| \partial_{y_i}\partial_x^m \partial_t^k W_t^{\lam}(x,y)\big|_{y=\theta}\Big\|_{\mathbb{B}}
	\lesssim \int \frac{1}{\q^{n\slash 2+ |\lam|+1\slash 2}} \, d\Omega_{\lam}(s),
$$
and the desired bound follows by Lemma \ref{lem:bridge}. Step 3 is complete.

\subsection{Multipliers of Laplace transform type $T_{\mathcal{M}}^{\lam}$} \label{ssec:Laplace} \quad\\
We will consider multipliers $\mathcal{M}$ of the form
$$
\mathcal{M}(z) = \mathcal{M}_{W}(z) =
|z|^2\int_0^{\infty} e^{-|z|^2s}\psi(s)\, ds, \qquad z \in \rnp,
$$
where $\psi$ is a bounded function on $\rp$.
The arguments given below and the subordination principle allow to treat in a similar
way the multipliers $\mathcal{M}=\mathcal{M}_P$
obtained from the formula above by replacing $|z|^2$ by $|z|$ (this change corresponds
to replacing the heat kernel by the Poisson kernel in the expression defining the 
associated integral kernel).

\noindent \textbf{Step 1.} The fact that the operator 
$T^{\lam}_{\mathcal{M}}f = h_{\lam}(\mathcal{M} h_{\lam}f)$ is bounded on $L^2(d\mu_{\lam})$ is
clear, in view of the boundedness of $\mathcal{M}$ and Plancherel's theorem for the Hankel transform.

\noindent \textbf{Step 2.} We will show that $T^{\lam}_{\mathcal{M}}$ is associated with the
kernel
$$
K^{\lam}_{\mathcal{M}}(x,y) = - \int_0^{\infty} \psi(t) \, \partial_t W_t^{\lam}(x,y)\, dt, \qquad
	x,y \in \rnp, \quad x\neq y.
$$
By density arguments, it suffices to verify that
\begin{equation} \label{assocM}
\big\langle T_{\mathcal{M}}^{\lam}f,g\big\rangle_{d\mu_{\lam}} = \int_{\rnp}\int_{\rnp}
K^{\lam}_{\mathcal{M}}(x,y) f(y)\overline{g(x)}\, d\mu_{\lam}(y)\, d\mu_{\lam}(x)
\end{equation}
for all $f,g \in C_c^{\infty}$ with disjoint supports.
We focus on the left-hand side of \eqref{assocM}. By the definition of $T^{\lam}_{\mathcal{M}}$
and Plancherel's theorem for the Hankel transform we get
\begin{align} \nonumber
\big\langle T_{\mathcal{M}}^{\lam}f,g\big\rangle_{d\mu_{\lam}} & =
\int_{\rnp} h_{\lam}\big(\mathcal{M}h_{\lam}f\big)(z) \overline{g(z)}\, d\mu_{\lam}(z) \\
& = \int_{\rnp} \mathcal{M}(z) h_{\lam}f(z) h_{\lam}\overline{g}(z)\, d\mu_{\lam}(z) \nonumber \\
& = \int_0^{\infty} \psi(t) \int_{\rnp} |z|^2 e^{-t|z|^2} h_{\lam}f(z)
	h_{\lam}\overline{g}(z)\, d\mu_{\lam}(z)\, dt. \label{inner}
\end{align}
To write the last equality we used Fubini's theorem. Its application is justified since
\begin{align*}
\int_{\rnp} \int_0^{\infty} \big| \psi(t)\, |z|^2 e^{-t|z|^2} 
	h_{\lam}f(z) h_{\lam}\overline{g}(z)\big| \, d\mu_{\lam}(z)\, dt
	& \le \|\psi\|_{\infty} \int_{\rnp} \big| h_{\lam}f(z) h_{\lam}\overline{g}(z)\big|\, d\mu_{\lam}(z)\\
& \le \|\psi\|_{\infty} \|f\|_{L^2(d\mu_{\lam})} \|g\|_{L^2(d\mu_{\lam})} < \infty.
\end{align*}
We next analyse the inner integral in \eqref{inner}. Plugging in the integrals defining $h_{\lam}f$
and $h_{\lam}\overline{g}$ and then using Fubini's theorem we arrive at
\begin{align*}
& \int_{\rnp} |z|^2 e^{-t|z|^2} h_{\lam}f(z) h_{\lam}\overline{g}(z)\, d\mu_{\lam}(z)\\ 
& = \int_{\rnp} \int_{\rnp} \int_{\rnp} |z|^2 e^{-t|z|^2} \varphi_z^{\lam}(x) \varphi_z^{\lam}(y)\,
	d\mu_{\lam}(z) \, f(y)\overline{g(x)} \, d\mu_{\lam}(y)\, d\mu_{\lam}(x) \\
& = \int_{\rnp}\int_{\rnp} - \partial_t W_t^{\lam}(x,y) f(y) 
	\overline{g(x)} \, d\mu_{\lam}(y) d\mu_{\lam}(x).
\end{align*}
The application of Fubini in the first identity above is legitimate since, taking into account
\eqref{phiest} and that $t>0$,
\begin{align*}
& \int_{\rnp}\int_{\rnp}\int_{\rnp} \Big| |z|^2 e^{-t|z|^2} \varphi_z^{\lam}(x) \varphi_z^{\lam}(y)
	f(y)\overline{g(x)} \Big| \,d\mu_{\lam}(z) \,d\mu_{\lam}(y) \,d\mu_{\lam}(x) \\
& \lesssim  \|f\|_{L^1(d\mu_{\lam})} \, \|g\|_{L^1(d\mu_{\lam})}
	\int_{\rnp} |z|^2 e^{-t|z|^2} \, d\mu_{\lam}(z) < \infty.
\end{align*}
The second identity above is a consequence of the equality
$$
- \partial_t \int_{\rnp} e^{-t|z|^2} \varphi_z^{\lam}(x) \varphi_z^{\lam}(y)\, d\mu_{\lam}(z)
	= \int_{\rnp} |z|^2 e^{-t|z|^2} \varphi_z^{\lam}(x) \varphi_z^{\lam}(y) \, d\mu_{\lam}(z), 
\qquad t>0.
$$
Here passing with $\partial_t$ under the integral can be easily justified by \eqref{phiest} and the
fact that for $t>\varepsilon>0$ we have $|z|^2 e^{-t|z|^2} < |z|^2 e^{-\varepsilon |z|^2}$, $z\in \rnp$.

Summing up, we proved that 
$$
\big\langle T_{\mathcal{M}}^{\lam}f,g\big\rangle_{d\mu_{\lam}} =
- \int_0^{\infty} \psi(t) \int_{\rnp}\int_{\rnp} \partial_t W_t^{\lam}(x,y) f(y) 
	\overline{g(x)} \, d\mu_{\lam}(y) d\mu_{\lam}(x)\, dt.
$$
To see that this expression coincides with the right-hand side in \eqref{assocM} it is now enough
to interchange the order of integrals. An application of Fubini's theorem is again possible because
\begin{align*}
& \int_{\rnp}\int_{\rnp} \int_0^{\infty} \big| \psi(t) \partial_t W_t^{\lam}(x,y) f(y) 
	\overline{g(x)}\big| \, dt \, d\mu_{\lam}(y) \, d\mu_{\lam}(x) \\
& \lesssim \|f\|_{\infty} \|g\|_{\infty} \int_{\support f} \int_{\support g}
	\frac{1}{\mu_{\lam}(B(x,|x-y|))} \, d\mu_{\lam}(y)\, d\mu_{\lam}(x) < \infty,
\end{align*}
where we used the assumptions on $f$ and $g$ and the estimate
$$
\int_0^{\infty} \big| \psi(t) \partial_t W_t^{\lam}(x,y)\big|\, dt \lesssim 
	\frac{1}{\mu_{\lam}(B(x,|x-y|))}
$$
obtained implicitly in Step 3 below. The verification of \eqref{assocM} is finished.

\noindent \textbf{Step 3.} We show the standard estimates for the (scalar-valued) kernel 
$K^{\lam}_{\mathcal{M}}(x,y)$. By \eqref{heatform} and Lemma \ref{lem:estexp} we have
$$
|\partial_t W_t^{\lam}(x,y)| \lesssim \frac{1}{t^{n\slash 2+|\lam|+1}} \int
	\exp\Big( -\frac{\q}{8t}\Big) \, d\Omega_{\lam}(s).
$$
Using the boundedness of $\psi$ and changing the variable of integration $t \mapsto \q t$ we get
$$
|K^{\lam}_{\mathcal{M}}(x,y)| \lesssim \int \int_0^{\infty} \frac{1}{t^{n\slash 2+|\lam|+1}}
	\exp\Big( -\frac{\q}{8t}\Big)\, dt \, d\Omega_{\lam}(s) \lesssim 
	\int \frac{1}{\q^{n\slash 2+|\lam|}} \, d\Omega_{\lam}(s).
$$
Now the growth estimate \eqref{gr} with $\mathbb{B}=\mathbb{C}$ follows by Lemma \ref{lem:bridge}.

To prove the gradient estimate \eqref{grad}, by symmetry reasons we may consider only the derivatives
$\partial_{x_i}$, $i=1,\ldots,n$. Then Lemma \ref{lem:estexp} gives
$$
|\partial_{x_i}\partial_t W_t^{\lam}(x,y)| \lesssim \frac{1}{t^{n\slash 2+|\lam|+3\slash 2}} \int
	\exp\Big( -\frac{\q}{8t}\Big) \, d\Omega_{\lam}(s)
$$
and hence, proceeding as above,
$$
|\partial_{x_i} K^{\lam}_{\mathcal{M}}(x,y)| \lesssim \int \int_0^{\infty} 
	\frac{1}{t^{n\slash 2+|\lam|+3\slash 2}}
	\exp\Big( -\frac{\q}{8t}\Big)\, dt \, d\Omega_{\lam}(s) 
	\lesssim \int \frac{1}{\q^{n\slash 2+|\lam|+ 1\slash 2}} \, d\Omega_{\lam}(s).
$$
The conclusion follows by Lemma \ref{lem:bridge}.

\subsection{Riesz transforms $R^{\lam}_m$}  \label{sec:riesz}\quad\\
Recall that, see Section \ref{sec:prel}, we defined 
\begin{equation} \label{defR}
R_m^{\lam}f = \partial^m \Delta_{\lam}^{-|m|\slash 2}f =
	\partial^m h_{\lam} \big( |z|^{-|m|} h_{\lam}f\big), \qquad f \in C^{\lam}.
\end{equation}
First of all, we ensure that this definition is correct and that $C^{\lam}$ is dense in $L^2(d\mu_{\lam})$.
Indeed, given $f\in C^{\lam}$, $h_{\lam}f$, and thus also $|z|^{-|m|}h_{\lam}f$, belong to 
$C_c^{\infty}(\rnp)$, by the definition of $C^{\lam}$. Then by Lemma \ref{lem:hsmooth} the function
$h_{\lam}(|z|^{-|m|}h_{\lam}f)$ is smooth, so the formula defining $R_m^{\lam}$ makes sense.
For the density of $C^{\lam}$, observe that $h_{\lam}(C_c^{\infty}(\rnp))\subset C^{\lam}$; this
follows from the definition of $C^{\lam}$, Lemma \ref{lem:hsmooth} and the fact that $h_{\lam}$
coincides with its inverse in $L^2(d\mu_{\lam})$. Since $C_c^{\infty}(\rnp)$ is dense in 
$L^2(d\mu_{\lam})$ and $h_{\lam}$ is an isometry there, the conclusion follows.

\noindent \textbf{Step 1.} We verify that $R_m^{\lam}$ is bounded from $C^{\lam}\subset L^2(d\mu_{\lam})$
to $L^2(d\mu_{\lam})$. In consequence, it extends uniquely to a bounded linear operator on 
$L^2(d\mu_{\lam})$.

Let $f\in C^{\lam}$. Using Lemma \ref{lem:hsmooth} and Lemma \ref{decomp_phi} we can write
\begin{align} \nonumber
R_m^{\lam}f(x) & = \int_{\rnp} \partial_x^m \varphi_x^{\lam}(z) |z|^{-|m|} h_{\lam}f(z)\, d\mu_{\lam}(z) \\
& = \sum_{j \le m} c_j x^j \int_{\rnp} \varphi_x^{\lam+j}(z) |z|^{-|m|} z^{m+j} h_{\lam}f(z)\,
	d\mu_{\lam}(z) \label{strR} \\
& = \sum_{j \le m} c_j x^j h_{\lam+j}\big(|z|^{-|m|} z^{m-j} h_{\lam}f\big)(x). \nonumber	
\end{align}
Then, taking into account that the Hankel transform is an $L^2$-isometry, we see that
\begin{align*}
\big\| R_m^{\lam}f \big\|^2_{L^2(d\mu_{\lam})} & \lesssim \sum_{j\le m} \big\| h_{\lam+j}\big(
	|z|^{-|m|} z^{m-j} h_{\lam}f\big) \big\|^2_{L^2(d\mu_{\lam+j})} \\
& = \sum_{j \le m} \big\| |z|^{-|m|} z^{m} h_{\lam}f\big\|^2_{L^2(d\mu_{\lam})}
	\lesssim \|h_{\lam}f\|^2_{L^2(d\mu_{\lam})} = \|f\|^2_{L^2(d\mu_{\lam})}.
\end{align*}

\noindent \textbf{Step 2.} We show that $R_m^{\lam}$ is associated with the kernel
$$
R_m^{\lam}(x,y) = \frac{1}{\Gamma(|m|\slash 2)} \int_0^{\infty} \partial_x^m W_t^{\lam}(x,y)
	\, t^{|m|\slash 2-1}\, dt, \qquad x,y\in \rnp, \quad x\neq y.
$$
By density arguments, it is enough to justify the identity
\begin{equation} \label{assocR}
\big\langle R_m^{\lam}f,g\big\rangle_{d\mu_{\lam}} = \int_{\rnp}\int_{\rnp} R_m^{\lam}(x,y)
f(y) \overline{g(x)}\, d\mu_{\lam}(y)\, d\mu_{\lam}(x)
\end{equation}
for all $f,g \in C_c^{\infty}(\rnp)$ such that $\support f \cap \support g = \emptyset$.

First we focus on the left-hand side of \eqref{assocR}. Notice that here we cannot apply \eqref{defR}
directly to $R_m^{\lam}f$ because $f$ is not necessarily in $C^{\lam}$. To overcome this obstacle,
we shall use a limiting argument to express $\langle R_m^{\lam}f,g\rangle_{d\mu_{\lam}}$ by a sum
of certain integrals. Let $\{f_N\}\subset C^{\lam}$ be a sequence approximating $f$ in $L^2(d\mu_{\lam})$.
Clearly, in view of the $L^2$-boundedness of $R_m^{\lam}$,
$\langle R_m^{\lam}f_N,g\rangle_{d\mu_{\lam}} \to \langle R_m^{\lam}f,g\rangle_{d\mu_{\lam}}$ as
$N \to \infty$. On the other hand, for each $N$ we can write (see \eqref{strR})
$$
\big\langle R_m^{\lam}f_N,g\big\rangle_{d\mu_{\lam}} = \sum_{j \le m} c_j \int_{\rnp} \int_{\rnp}
(xz)^j \varphi_x^{\lam+j}(z) z^{m} |z|^{-|m|} h_{\lam}f_N(z)\, d\mu_{\lam}(z)\, \overline{g(x)}\,
	d\mu_{\lam}(x).
$$
Then interchanging the order of integrals we get
\begin{equation} \label{RL2}
\big\langle R_m^{\lam}f_N,g\big\rangle_{d\mu_{\lam}} = \sum_{j \le m} c_j \int_{\rnp} z^{m+j} |z|^{-|m|}
	h_{\lam}f_N(z) h_{\lam+j}\big(x^{-j}\overline{g}\big)(z)\, d\mu_{\lam}(z).
\end{equation}
Here the application of Fubini's theorem is legitimate since
\begin{align*}
& \int_{\rnp} \big| z^{m+j} |z|^{-|m|} h_{\lam}f_N(z) h_{\lam+j}\big(x^{-j}\overline{g}\big)(z)\big|
	\, d\mu_{\lam}(z)\\  & \quad \le
  \big\| |z|^{-|m|} z^m h_{\lam}f_N \big\|_{L^2(d\mu_{\lam})} \big\| h_{\lam+j}(x^{-j}\overline{g})
	\big\|_{L^2(d\mu_{\lam+j})}
 \le \|f_N\|_{L^2(d\mu_{\lam})} \, \|g\|_{L^2(d\mu_{\lam})} < \infty.
\end{align*} 
This chain of estimates shows also that one can pass to the limit with $N$ in \eqref{RL2}. Thus we get
$$
\big\langle R_m^{\lam}f,g\big\rangle_{d\mu_{\lam}} = \sum_{j \le m} c_j \int_{\rnp} z^{m+j} |z|^{-|m|}
	h_{\lam}f(z) h_{\lam+j}\big(x^{-j}\overline{g}\big)(z)\, d\mu_{\lam}(z).
$$

We next analyse the right-hand side of \eqref{assocR}. Taking into account the assumptions imposed on
$f$ and $g$ and the estimate
$$
\int_0^{\infty} \big|\partial_x^m W_t^{\lam}(x,y)\big| t^{|m|\slash 2 -1}\, dt \lesssim 
	\frac{1}{\mu_{\lam}(B(x,|x-y|))}
$$
proved in Step 3 below, we may apply Fubini's theorem to get
\begin{align*}
& \int_{\rnp}\int_{\rnp} R_m^{\lam}(x,y) f(y) \overline{g(x)}\, d\mu_{\lam}(y)\, d\mu_{\lam}(x) \\
& = \frac{1}{\Gamma(|m|\slash 2)}\int_0^{\infty} t^{|m|\slash 2-1} \int_{\rnp}\int_{\rnp} \partial_x^m
	\int_{\rnp} e^{-t|z|^2} \varphi_x^{\lam}(z) \varphi_y^{\lam}(z)\, d\mu_{\lam}(z) \, 
		f(y)\overline{g(x)}\, d\mu_{\lam}(y)\, d\mu_{\lam}(x)\, dt.
\end{align*}
We now focus on the three inner integrals entering the last expression. Observe that we may exchange
$\partial_x^m$ with the integral against $d\mu_{\lam}(z)$; this can be justified by means of 
Lemma \ref{decomp_phi}, \eqref{phiest} and the dominated convergence theorem. Then an application
of Lemma \ref{decomp_phi} leads to
\begin{align*}
& \int_{\rnp}\int_{\rnp} \partial_x^m
	\int_{\rnp} e^{-t|z|^2} \varphi_x^{\lam}(z) \varphi_y^{\lam}(z)\, d\mu_{\lam}(z) \, 
		f(y)\overline{g(x)}\, d\mu_{\lam}(y)\, d\mu_{\lam}(x) \\
& = \int_{\rnp}\int_{\rnp}\int_{\rnp} e^{-t|z|^2} z^m \sum_{j\le m} c_j (xz)^j \varphi_x^{\lam+j}(z)
	\varphi_y^{\lam}(z) \, d\mu_{\lam}(z)\, f(y)\overline{g(x)}\, d\mu_{\lam}(y)\, d\mu_{\lam}(x) \\
& = \sum_{j \le m} c_j \int_{\rnp} e^{-t|z|^2} z^{m+j} h_{\lam}f(z) h_{\lam+j}(x^{-j}\overline{g})(z)\,
	d\mu_{\lam}(z).
\end{align*}
The last identity is a consequence of Fubini's theorem, and its application is possible because,
in view of the assumptions imposed on $f$ and $g$, \eqref{phiest} and the fact that $t>0$,
$$
\int_{\rnp}\int_{\rnp}\int_{\rnp} e^{-t|z|^2} z^{m+j} x^j \big| \varphi_x^{\lam+j}(z)\varphi_y^{\lam}(z)
	f(y)\overline{g(x)}\big|\, d\mu_{\lam}(z)\, d\mu_{\lam}(y)\, d\mu_{\lam}(x) < \infty.
$$
Summing up, we proved that the right-hand side of \eqref{assocR} is equal to
$$
\sum_{j \le m} c_j \frac{1}{\Gamma(|m|\slash 2)} \int_0^{\infty} t^{|m|\slash 2-1} \int_{\rnp}
	e^{-t|z|^2} z^{m+j} h_{\lam}f(z) h_{\lam+j}(x^{-j}\overline{g})(z)\, d\mu_{\lam}(z)\, dt.
$$
To finish the proof of \eqref{assocR} it suffices now to justify the possibility of exchanging
the order of integrals in the last expression. This, however, follows readily by means of the estimate
enabling the application of Fubini's theorem leading to \eqref{RL2}.

\noindent \textbf{Step 3.} We prove the standard estimates for the kernel $R_{m}^{\lam}(x,y)$.
By \eqref{heatform} and Lemma \ref{lem:estexp} we have
$$
\big| \partial_x^{m} W_t^{\lam}(x,y)\big| \lesssim \frac{1}{t^{n\slash 2 + |\lam|+|m|\slash 2}}
	\int \exp\Big( -\frac{\q}{8t}\Big) \, d\Omega(s).
$$
This implies
$$
\big| R_m^{\lam}(x,y) \big| \lesssim \int \int_0^{\infty} \frac{1}{t^{n\slash 2 + |\lam|+1}}
	\exp\Big( -\frac{\q}{8t}\Big) \, dt \, d\Omega(s),
$$
and the right-hand side here was already estimated in the required way in Step 3 of Section
\ref{ssec:Laplace} above.

To show the gradient bound, we observe that again by \eqref{heatform} and Lemma \ref{lem:estexp}
$$
\big| \nabla_{\!x,y} \,\partial_x^{m} W_t^{\lam}(x,y)\big| \lesssim 
	\frac{1}{t^{n\slash 2 + |\lam|+|m|\slash 2+1\slash 2}}
		\int \exp\Big( -\frac{\q}{8t}\Big) \, d\Omega(s)
$$
and consequently
$$
\big| \nabla_{\!x,y} R_m^{\lam}(x,y)\big| \lesssim \int \int_0^{\infty} 
	\frac{1}{t^{n\slash 2 + |\lam|+3\slash 2}}
		\exp\Big( -\frac{\q}{8t}\Big) \, dt \, d\Omega(s).
$$
Now the conclusion follows as in Step 3 of Section \ref{ssec:Laplace}.



\begin{thebibliography}{GG}

\bibitem{AK}      
K.F. Andersen and R.A. Kerman,
\textit{Weighted norm inequalities for generalized Hankel conjugate transformations},
Studia Math. \textbf{71} (1981), 15--26.

\bibitem{BBFMT}   
J.J. Betancor, D. Buraczewski, J.C. Fari\~{n}a, M.T. Mart\'{\i}nez, and J.L. Torrea,
\textit{Riesz transforms related to Bessel operators},
Proc. Roy. Soc. Edinburgh Sect. A \textbf{137} (2007), 701--725.

\bibitem{BCC1}     
J.J. Betancor, A.J. Castro, and J. Curbelo,
\textit{Harmonic analysis operators associated with multidimensional Bessel operators},
preprint 2010. \texttt{arXiv:1003.0397v1} 

\bibitem{BCC2}     
J.J. Betancor, A.J. Castro, and J. Curbelo,
\textit{Spectral Multipliers for Multidimensional Bessel Operators},
to appear in J. Fourier Anal. Appl. 
\texttt{arXiv:1003.0403v1} 

\bibitem{BDT}    
J.J. Betancor, J. Dziuba\'nski, and J.L. Torrea,
\textit{On Hardy spaces associated with Bessel operators},
J. Anal. Math. \textbf{107} (2009), 195--219.

\bibitem{BFMR}    
J.J. Betancor, J.C. Fari\~{n}a, M.T. Mart\'{\i}nez, and L. Rodr\'{\i}guez-Mesa,
\textit{Higher order Riesz transforms associated with Bessel operators},
Ark. Mat. \textbf{46} (2008), 219--250.

\bibitem{BFMT}    
J.J. Betancor, J.C. Fari\~{n}a, M.T. Mart\'{\i}nez, and J.L. Torrea,
\textit{Riesz transforms and g-functions associated with Bessel operators and their appropriate 
Banach spaces},
Israel J. Math. \textbf{157} (2007), 259--282.

\bibitem{BFS}     
J.J. Betancor, J.C. Fari\~{n}a, and A. Sanabria,
\textit{On Littlewood-Paley functions associated with Bessel operators},
Glasg. Math. J. \textbf{51} (2009), 55--70.

\bibitem{BHNV}    
J.J. Betancor, E. Harboure, A. Nowak, and B. Viviani,
\textit{Mapping properties of fundamental operators in harmonic analysis related to Bessel operators},
Studia Math. \textbf{197} (2010), 101--140.

\bibitem{BMR}     
J.J. Betancor, M.T. Mart\'{\i}nez, and L. Rodr\'{\i}guez-Mesa,
\textit{Laplace transform type multipliers for Hankel transforms},
Canad. Math. Bull. \textbf{51} (2008), 487--496.

\bibitem{CW}      
R. Coifman and  G. Weiss,
\textit{Analyse harmonique non-commutative sur certains espaces homog\`enes},
Lecture Notes in Mathematics \textbf{242}, Springer, Berlin-New York, 1971.

\bibitem{CW2}     
R. Coifman and  G. Weiss,
\textit{Extensions of Hardy spaces and their use in analysis},
Bull. Amer. Math. Soc. \textbf{83} (1977), 569--645.

\bibitem{D}       
J. Dziuba\'nski,
\textit{Hardy spaces associated with semigroups generated by Bessel operators with potentials},
Houston J. Math. \textbf{34} (2008), 205--234.

\bibitem{MS}      
B. Muckenhoupt and E.M. Stein,
\textit{Classical expansions and their relation to conjugate harmonic functions},
Trans. Amer. Math. Soc. \textbf{118} (1965), 17--92.

\bibitem{N}
A. Nowak,
\textit{Heat-diffusion and Poisson integrals for Laguerre and special Hermite expansions on weighted 
$L^p$ spaces},
Studia Math. \textbf{158} (2003), 239--268.

\bibitem{NoSj3}
A. Nowak and P. Sj\"ogren,
\textit{Calder\'on-Zygmund operators related to Jacobi expansions},
preprint 2010. \texttt{arXiv:1011.3615v1}

\bibitem{NS2}     
A. Nowak and K. Stempak,
\textit{Riesz transform for multi-dimensional Laguerre function expansions},
Adv. Math. \textbf{215} (2007), 642--678.

\bibitem{NS5}     
A. Nowak and K. Stempak,
\textit{On $L^p$-contractivity of Laguerre semigroups},
preprint 2010. \texttt{arXiv:1011.5437v1}

\bibitem{RRT}     
J.L. Rubio de Francia, F. Ruiz, and J.L. Torrea,
\textit{Les op\'erateurs de Calder\'on-Zygmund vectoriels},
C. R. Math. Acad. Sci. Paris \textbf{297} (1983), 477--480.

\bibitem{RT}      
F. Ruiz and J.L. Torrea,
\textit{Vector-valued Calder\'on-Zygmund theory and Carleson measures on spaces of homogeneous nature},
Studia Math. \textbf{88} (1988), 221--243.

\bibitem{Sa}
E{.} Sasso,
\emph{Functional calculus for the Laguerre operator},
Math. Z. \textbf{249} (2005), 683--711.

\bibitem{Stlitt}  
E.M. Stein,
\textit{Topics in harmonic analysis related to the Littlewood-Paley theory},
Annals of Math. Studies \textbf{63}, Princeton University Press, Princeton, N.J., 1970.

\bibitem{Stem}    
K. Stempak,
\textit{La th\'eorie de Littlewood-Paley pour la transformation de Fourier-Bessel},
C. R. Math. Acad. Sci. Paris \textbf{303} (1986), 15--18.

\bibitem{Szarek}
T. Szarek,
\emph{Littlewood-Paley-Stein type square functions based on Laguerre semigroups}, 
Acta Math. Hungar., in press. Online First version DOI: 10.1007/s10474-010-0016-8.

\bibitem{Wat}     
G.N. Watson,
\textit{A treatise on the theory of Bessel functions},
Cambridge University Press, Cambridge, 1966.

\bibitem{YY}      
D. Yang and D. Yang,
\textit{Real-variable characterizations of Hardy spaces associated with Bessel operators},
preprint 2010.

\bibitem{Yos}     
K. Yosida,
\textit{Functional analysis},
Springer, New York, 1968.

\end{thebibliography}
\end{document}